\numberwithin{equation}{section}
\newenvironment{proof}{\removelastskip\par\medskip   
\noindent{\em proof} \rm}{\penalty-20\null\hfill$\square$\par\medbreak}
\newtheorem{theorem}{Theorem}[section]
\newtheorem{lemma}[theorem]{Lemma}
\newtheorem{proposition}[theorem]{Proposition}
\newtheorem{example}[theorem]{Example}
\newcommand{\CD}{{\sf CD}}
\newcommand{\RCD}{{\sf RCD}}
\newcommand{\ppi}{{\mbox{\boldmath$\pi$}}}
\newcommand{\ggamma}{{\mbox{\boldmath$\gamma$}}}
\newcommand{\sggamma}{{\mbox{\scriptsize\boldmath$\gamma$}}}
\newcommand{\supp}{\mathop{\rm supp}\nolimits}
\renewcommand{\d}{{\mathrm d}}
\newcommand{\N}{\mathbb{N}}
\newcommand{\Z}{\mathbb{Z}}
\newcommand{\R}{\mathbb{R}}
\newcommand{\LL}{\mathscr{L}}
\newcommand{\mm}{\mathfrak m}
\newcommand{\sfd}{{\sf d}}
\newcommand{\prob}[1]{\mathscr P(#1)}
\newcommand{\probt}[1]{\mathscr P_2(#1)}
\newcommand{\Op}{\rm Opt}
\newcommand{\geo}{{\rm{Geo}}}
\newcommand{\e}{{\rm{e}}}
\newcommand{\id}{{\rm{id}}}
\newcommand{\gopt}{{\rm{OptGeo}}}
\newcommand{\midp}{{\rm Mid}}
\newcommand{\restr}[1]{\lower3pt\hbox{$|_{#1}$}}
\newcommand{\ent}{\mathrm{Ent}}
\title{Failure of the local-to-global property for $CD(K,N)$ spaces}
\begin{document}

\author{
   Tapio Rajala
   \thanks{University of Jyv\"askyl\"a, \textsf{tapio.m.rajala@jyu.fi}}
   }
\maketitle

\begin{abstract}
 Given any $K \in \R$ and $N \in [1,\infty]$ we show that there exists a compact geodesic metric measure space satisfying 
 locally the $\CD(0,4)$ condition but failing $\CD(K,N)$ globally.
 The space with this property is a suitable non convex subset of $\R^2$ equipped with the $l^\infty$-norm 
 and the Lebesgue measure. Combining many such spaces gives a (non compact) complete geodesic metric measure space satisfying 
 $\CD(0,4)$ locally but failing $\CD(K,N)$ globally for every $K$ and $N$. 
\end{abstract}

\tableofcontents

\section{Introduction}

In \cite{Lott-Villani09, Sturm06I, Sturm06II} Lott, Sturm and Villani proposed a definition of Ricci curvature lower bounds in metric measure spaces.
The definitions were in terms of convexity properties of functionals in the space of probability measures.
The most relevant definition in the context of this paper is the $\CD(0,N)$ condition, with $0$ taking the place of 
a lower bound on the curvature, which is usually denoted by $K \in \R$ in the more general definition
($K=0$ here means non negative Ricci curvature), and $N < \infty$ being the upper bound
on the dimension of the space.
The $\CD(0,N)$ condition on a metric measure space $(X,\sfd,\mm)$ requires that between any two probability measures on
the space there exists at least one geodesic along which the entropy
\[
 \ent_N(\rho\mm) = -\int_X\rho^{1-\frac1{N'}}\d\mm
\]
is convex for all $N' \ge N$. (See Section \ref{sec:preli} for more details.)

Soon after the definition of $\CD(0,N)$ had been introduced it was noticed that $\R^n$ equipped with any norm and with
the Lebesgue measure satisfies $\CD(0,n)$. See the end of Villani's book \cite{Villani09} for an outline of the proof of this fact.
In particular we have:

\begin{theorem}[Cordero-Erausquin, Sturm and Villani]\label{thm:CD02maps}
The space $(\R^2,||\cdot||_\infty,\LL_2)$ satisfies $\CD(0,2)$.
\end{theorem}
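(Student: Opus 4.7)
The plan is to verify the $\CD(0,2)$ condition by producing, for any pair of compactly supported, absolutely continuous measures $\mu_0=\rho_0\LL_2$ and $\mu_1=\rho_1\LL_2$ with smooth bounded densities, a $W_2$-geodesic in $(\R^2,\|\cdot\|_\infty,\LL_2)$ along which the R\'enyi entropy $\ent_{N'}$ is convex for every $N'\ge 2$. The general case then follows by density and the lower semicontinuity of $\ent_{N'}$. I would carry this out by smooth approximation of the norm: approximate $\|\cdot\|_\infty$ by the smooth uniformly convex $p$-norms $\|\cdot\|_p$, which converge to $\|\cdot\|_\infty$ uniformly on compact sets as $p\to\infty$.

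For each fixed $p\in(1,\infty)$, a Brenier--McCann-type theorem for strictly convex costs gives a unique $W_2$-optimal transport map $T_p\colon\R^2\to\R^2$ from $\mu_0$ to $\mu_1$. Since straight segments are constant-speed geodesics in any normed space, the displacement interpolation $\mu^p_t:=((1-t)\id+tT_p)_*\mu_0$ is a $W_2$-geodesic in $(\R^2,\|\cdot\|_p,\LL_2)$. The key analytic input is the concavity of $t\mapsto(\det DT_{p,t}(x))^{1/2}$ for a.e.\ $x$, where $DT_{p,t}=(1-t)I+tDT_p$ and $DT_p$ inherits positive semi-definiteness from the Hessian of the $c_p$-convex potential; this concavity follows from the Minkowski determinant inequality on the PSD cone. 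Combining this with the change of variables $\rho^p_t(T_{p,t}(x))\det DT_{p,t}(x)=\rho_0(x)$ and the fact that $\sigma\mapsto\sigma^{2/N'}$ is concave for $N'\ge 2$, one obtains
\[
 \ent_{N'}(\mu^p_t)=-\int \rho_0(x)^{1-1/N'}\bigl(\det DT_{p,t}(x)\bigr)^{1/N'}\,\d\LL_2(x),
\]
and hence convexity of $t\mapsto\ent_{N'}(\mu^p_t)$ on $[0,1]$, i.e.\ the $\CD(0,2)$ condition for $(\R^2,\|\cdot\|_p,\LL_2)$.

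To conclude, pass to the limit $p\to\infty$. On each closed ball $B_R\subset\R^2$ the spaces $(B_R,\|\cdot\|_p,\LL_2|_{B_R})$ converge in measured Gromov--Hausdorff sense to $(B_R,\|\cdot\|_\infty,\LL_2|_{B_R})$, and the stability of $\CD(K,N)$ under mGH convergence transfers the $\CD(0,2)$ property to each compact restriction. An exhaustion argument, using that a $W_2$-geodesic between compactly supported measures stays in a bounded set, then globalizes the conclusion to the full $(\R^2,\|\cdot\|_\infty,\LL_2)$.

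The main obstacle is the Jacobian concavity step for the approximating non-Euclidean norm: unlike the Euclidean case, $T_p$ is not a classical gradient of a convex function but only a $c_p$-gradient, so extracting the positive semi-definite structure needed for the Minkowski determinant inequality requires a careful analysis of the $c_p$-convex potential. A conceptually simpler alternative is to work directly with the Knothe--Rosenblatt rearrangement, whose triangular structure trivializes the Jacobian concavity via an AM--GM computation; there, the difficulty instead shifts to verifying that the straight-line interpolation is already a $W_2$-geodesic in $(\R^2,\|\cdot\|_\infty)$, a step that exploits the non-strict convexity of $\|\cdot\|_\infty$ and the consequent flexibility of its $W_2$-optimal couplings.
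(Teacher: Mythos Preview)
The paper does not actually give a proof of this theorem; it is quoted as a known result of Cordero--Erausquin, Sturm and Villani, with a pointer to Villani's book for an outline. The only hint about the method appears later in Section~\ref{sec:verify}, where the author remarks that the theorem ``is proven by approximating the norm $\|\cdot\|_\infty$ with strictly convex norms.'' Your proposal follows exactly this route---approximate by $\|\cdot\|_p$, verify $\CD(0,2)$ for each $p$, pass to the limit by stability---so at the level of strategy you agree with what the paper indicates.

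The obstacle you flag is genuine, and your tentative handling of it is where the argument would break as written. For the non-Euclidean cost $c_p(x,y)=\|x-y\|_p^2$ the optimal map has the form $T_p(x)=x-\nabla h^*(\nabla\phi(x))$ with $h(v)=\|v\|_p^2$ and $\phi$ a $c_p$-convex potential, so that $DT_p=I-D^2h^*(\nabla\phi)\,D^2\phi$. This matrix is \emph{not} symmetric in general, and therefore the Minkowski determinant inequality on the positive semi-definite cone does not apply to $(1-t)I+tDT_p$. The standard repair (this is the content of the argument attributed to Cordero--Erausquin in Villani's book) is to observe that $D^2h^*(\nabla\phi)\,D^2\phi$ is similar, via conjugation by $(D^2h^*)^{1/2}$, to a symmetric matrix and hence diagonalizable with real eigenvalues; $c_p$-cyclical monotonicity then forces the eigenvalues $\mu_1,\mu_2$ of $DT_p$ to be nonnegative. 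One then writes
\[
\det\bigl((1-t)I+tDT_p\bigr)^{1/2}=\Bigl(\prod_{i=1}^{2}\bigl((1-t)+t\mu_i\bigr)\Bigr)^{1/2},
\]
a geometric mean of nonnegative affine functions of $t$, which is concave by the arithmetic--geometric mean inequality. No appeal to Minkowski is needed. With this correction the approximation-plus-stability scheme you describe is exactly the intended proof. Your Knothe--Rosenblatt alternative is interesting but, as you anticipate, trades one nontrivial step for another: showing that the induced straight-line interpolation is $W_2$-optimal for $\|\cdot\|_\infty$ is not automatic.
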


A problematic feature of spaces like $(\R^2,||\cdot||_\infty,\LL_2)$ is that between most of the points there exist
a huge number of geodesics joining them. In particular, there are a lot of branching geodesics. Initially many results
for $\CD(K,N)$ spaces were proven under the assumption that there are no branching geodesics. Later some of these results have been
proven without such assumption (for instance local Poincar\'e inequalities \cite{R2012a, R2012b}). In some results
the general case with branching geodesics remains open. Branching geodesics are also known to exist, for example,
in some positively curved $\CD(K,N)$ spaces, see the recent paper by Ohta \cite{Ohta2013}.

Until now one of the basic open questions for general $\CD(K,N)$ spaces was the local-to-global property
of the $\CD(K,N)$ condition. It is known that under the non branching assumption assuming $\CD(0,N)$ (or $\CD(K,\infty)$
or $\CD^*(K,N)$) to hold locally (i.e. in a neighbourhood of any point) is the same as assuming it to hold globally.
For $\CD(K,\infty)$ this was proven by Sturm \cite{Sturm06I}, for $\CD(0,N)$ by Villani \cite{Villani09}, and for
$CD^*(K,N)$ by Bacher and Sturm \cite{BacherSturm10}.
Such property is natural to expect from an abstract notion of Ricci curvature lower bounds - after all, the
classical definition is local.
The notion $\CD^*(K,N)$ refers to the reduced curvature-dimension condition. It is not (at least a priori) as restrictive
as the $\CD(K,N)$ condition, but it is more natural in the local-to-global questions.

\begin{figure}
   \centering
   \includegraphics[width=0.9\textwidth]{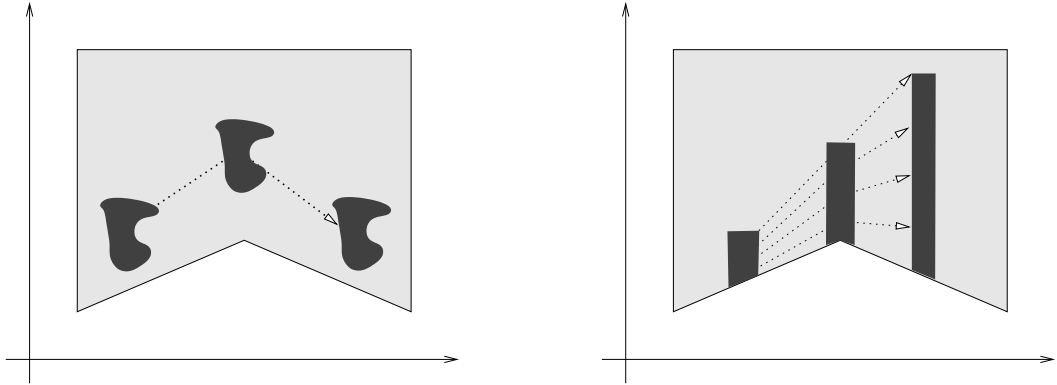}
   \caption{On the left we see how a measure can be easily transported around a corner when our distance is
            given by the $l^\infty$-norm. On the right we have the extremal case when we go around a corner. 
            In this case we have to squeeze the measure a bit.}
  \label{fig:idea}
 \end{figure}

In this paper we show that not even $\CD(0,N)$ does, in general, have the local-to-global property.
The idea of our example showing that local $\CD(0,N)$ does not imply global $\CD(0,N)$ is surprisingly simple.
One starts with the observation, which we already mentioned, that $(\R^2,||\cdot||_\infty)$ has lots of geodesics. 
There are even so many geodesics that one can go around some Euclidean corners with them.
Therefore we at least have domains in $\R^2$ that are not convex in the Euclidean sense but still (weakly) geodesically
convex with the $l^\infty$-norm.
Next we observe that we can locally move two identical objects around a corner, see the left picture in Figure
\ref{fig:idea}. This roughly means that moving measures that are approximately the same should not be a problem in view of
the local $\CD(0,2)$ condition.

For more general sets the $45$ degree angle gives the extremal case when going around a corner. See the right picture in
Figure \ref{fig:idea} for the extremal case. There we have to shrink the measure in the vertical direction when we move it
around the corner.
This suggests that we have to give up our hope on $\CD(0,2)$. Still the particular transport seems to satisfy $\CD(0,4)$,
for instance. However, when we take thinner and thinner strips closer and closer to the corner we notice that the
estimates do not scale property. An obvious idea to correct this
is to smoothen the corner, and in fact replacing the corner with a piece of a circle will do:

\begin{example}\label{ex:1}
 Let $K \in \R$ and $N \in [1,\infty]$. 
There exists a compact geodesic metric measure space $(X,\sfd,\mm)$ satisfying $\CD(0,4)$ locally, but failing to satisfy $\CD^*(K,N)$ 
(and $\CD(K,N)$) globally.
Take $X$ to be the closed subset of $\R^2$ shown in Figure \ref{fig:example}.
(We shall specify it more carefully in Section \ref{sec:verify}.)
As the distance take $\sfd(x,y) = ||x-y||_\infty$
and as the reference measure the restriction of the Lebesgue measure $\mm = \LL_2\restr{X}$.
 \begin{figure}
   \centering
   \includegraphics[width=0.6\textwidth]{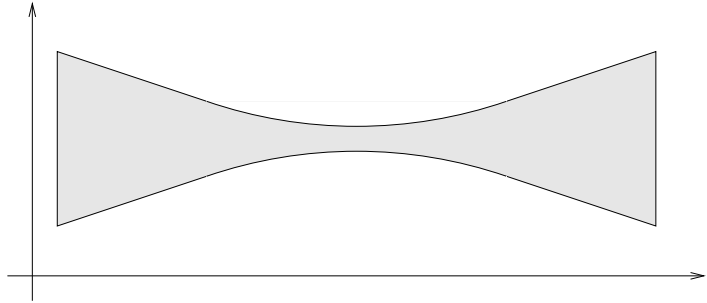}
   \caption{An illustration of the space $X$ of Example \ref{ex:1} as a compact subset of $\R^2$ with the $l^\infty$-norm.
            The space satisfies $\CD(0,4)$ locally, but not globally.}
  \label{fig:example}
 \end{figure}
\end{example}

We note that if in Example \ref{ex:1} we were to drop either the requirement that $(X,\sfd)$ is complete 
or the requirement that it is geodesic the example would be close to trivial. However, with both of these assumptions
in place, if we want to get the example as a subset of $\R^2$,
we are forced to consider optimal transport at and near the boundary of a non convex set. 
Verifying the $\CD(0,4)$ condition at the boundary turned out to require some calculations.

Indeed, proving that $(X,\sfd,\mm)$ locally satisfies $\CD(0,4)$ takes most of this paper whereas the failure of global $\CD^*(K,N)$
follows immediately by considering optimal transport between measures with large supports on the opposite sides of the 'neck'.
Gluing together infinitely many spaces of the type shown in Example \ref{ex:1} gives a (non compact) complete geodesic
metric measure space satisfying $\CD(0,4)$ locally but
failing the global $\CD(K,N)$ for any $K \in \R$ and $N \in [1,\infty]$.

Although $\CD(K,N)$ (and $\CD^*(K,N)$) fails to have the local-to-global property,
the more recent definition of Riemannian Ricci curvature bounds by Ambrosio, Gigli and Savar\'e
\cite{AmbrosioGigliSavare11-2} (see also \cite{AmbrosioGigliMondinoRajala12} for some generalization and simplifications
and \cite{Erbar-Kuwada-Sturm13, AMS} for the finite dimensional definitions),
 $\RCD^*(K,N)$ for short, could still have the local-to-global property. 
The fact that $\RCD^*(K,N)$ spaces are essentially non branching and there exist optimal maps from absolutely continuous measures
\cite{RajalaSturm12, Gigli12a, GigliRajalaSturm13} strongly supports this conjecture.

\smallskip
\noindent {\bf Acknowledgements.}
The author is grateful for the many enlightening discussions with Luigi Ambrosio and Nicola Gigli on this subject.
The author also acknowledges the financial support of the Academy of Finland project no. 137528.

\section{Preliminaries}\label{sec:preli}

In this paper the norm we mostly use is the $l^\infty$-norm and hence we sometimes abbreviate
 $||(x_0,y_0)-(x_1,y_1)|| := ||(x_0,y_0)-(x_1,y_1)||_\infty := \max\{|x_0-x_1|,|y_0-y_1|\}$.
We denote the Euclidean norm in $\R$ by $|\cdot|$.

\subsection{Optimal mass transportation}

We will give here only a few facts about optimal mass transportation. For a more detailed introduction
we refer to the books by Villani \cite{Villani09} and by Ambrosio and Gigli \cite{AmbrosioGigli11}.
We denote by $\prob X$ the space of Borel probability measures on the complete
and separable metric space $(X,\sfd)$ and by $\probt X \subset \prob X$ the subspace
consisting of all the probability measures with finite second moment. Our example $X$ is compact
and thus for it we have $\probt X = \prob X$. However, in general the measures with finite second moment are considered
in order to have finite $W_2$-distance between the measures (see below for the definition of the distance $W_2$).

Given two probability measures $\mu_0,\mu_1 \in \prob X$ and a Borel cost function $c \colon X \times X \to [0,\infty]$
the optimal mass transportation problem is to minimize
\begin{equation}\label{eq:generalmin}
 \int_X c(x,y)\,\d\ggamma(x,y)
\end{equation}
among all $\ggamma \in \prob{X \times X}$ with $\mu_0$ and $\mu_1$ as the first and the second marginal.

In the definition of the Ricci curvature lower bounds we will use the quadratic transportation distance $W_2(\mu_0,\mu_1)$,
which is given by the cost function $c(x,y) = \sfd(x,y)^2$. In other words, 
for $\mu_0,\mu_1 \in \probt X$ it is defined by
\begin{equation}\label{eq:Wdef}
  W_2^2(\mu_0,\mu_1) = \inf_\sggamma \int_X \sfd^2(x,y) \,\d\ggamma(x,y),
\end{equation}
where again the infimum is taken over all $\ggamma \in \prob{X \times X}$ with $\mu_0$ and $\mu_1$ as the first and the second marginal.
Assuming the space $(X,\sfd)$ to be geodesic, also the space $(\probt X, W_2)$ is geodesic.
We denote by $\geo(X)$ the space of (constant speed minimizing) geodesics on $(X,\sfd)$.
The notation $\e_t:\geo(X)\to X$, $t\in[0,1]$ is used for the evaluation 
maps defined by $\e_t(\gamma):=\gamma_t$.
A useful fact is that any geodesic $(\mu_t) \in \geo(\probt X)$
can be lifted to a measure $\ppi \in \prob{\geo(X)}$, so that $(\e_t)_\#\ppi = \mu_t$ for all $t \in [0,1]$.
Given $\mu_0,\mu_1\in\probt X$, we denote by $\gopt(\mu_0,\mu_1)$ the space of all
$\ppi \in \prob{\geo(X)}$ for which $(\e_0,\e_1)_\#\ppi$ realizes the minimum in \eqref{eq:Wdef}.


A property of optimal transport plans that we will frequently use is cyclical monotonicity.
It holds in a great generality, and in particular in the minimization problems we are considering
in this paper.
 A set $\Gamma \subset X \times X$ is called $c$-cyclically monotone if for any $k \in \N$
and $(x_1,y_1), \dots, (x_k,y_k) \in \Gamma$ we have
\[
 \sum_{i=1}^k c(x_i,y_i) \le \sum_{i=1}^k c(x_i,y_{i+1})
\]
with the identification $y_{k+1} = y_1$.
Now, given $\mu_0,\mu_1\in \probt X$
and an optimal transport plan $\ggamma$ minimizing \eqref{eq:generalmin} there exists a $c$-cyclically monotone subset
$\Gamma $ with full $\ggamma$-measure.

\subsection{Ricci curvature lower bounds in metric measure spaces}

We will define here the $\CD^*(K,N)$ condition, coming from the paper by Bacher and Sturm \cite{BacherSturm10},
and not the $\CD(K,N)$ condition.
The reason for this is that in the non branching case the $\CD^*(K,N)$ condition has the local-to-global property.
Moreover, for $K \ge 0$ and $N \in [1,\infty)$ we have
\[
 \CD(K,N) \Rightarrow \CD^*(K,N) \Rightarrow \CD(\frac{N-1}NK,N).
\]
For the proof of this and for a more detailed discussion of the relation with $\CD^*(K,N)$ and $\CD(K,N)$ we refer to \cite{BacherSturm10} 
(see also the papers by Cavalletti and Sturm \cite{Cavalletti-Sturm12} and by Cavalletti \cite{Cavalletti12}). 
Since we show that our example fails $\CD(K,\infty)$, it will also fail $\CD(K,N)$ and $\CD^*(K,N)$ for all $N$.

Given $K \in \R$ and $N \in [1, \infty)$, we define the distortion coefficient $[0,1]\times\R^+\ni (t,\theta)\mapsto \sigma^{(t)}_{K,N}(\theta)$ as
\[
\sigma^{(t)}_{K,N}(\theta):=\left\{
\begin{array}{ll}
+\infty,&\qquad\textrm{ if }K\theta^2\geq N\pi^2,\\
\frac{\sin(t\theta\sqrt{K/N})}{\sin(\theta\sqrt{K/N})}&\qquad\textrm{ if }0<K\theta^2 <N\pi^2,\\
t&\qquad\textrm{ if }K\theta^2=0,\\
\frac{\sinh(t\theta\sqrt{K/N})}{\sinh(\theta\sqrt{K/N})}&\qquad\textrm{ if }K\theta^2 <0.
\end{array}
\right.
\]

Let $K \in \R$ and $ N\in[1,  \infty)$. We say that a complete geodesic metric measure space  $(X,\sfd,\mm)$
 satisfies the $\CD^*(K,N)$ condition if for any two measures $\mu_0, \mu_1 \in \prob X$ with support 
bounded and contained in $\supp(\mm)$ there
exists a measure $\ppi \in \gopt(\mu_0,\mu_1)$ such that for every $t \in [0,1]$
and $N' \geq  N$ we have
\begin{equation}\label{eq:CD-def}
-\int\rho_t^{1-\frac1{N'}}\,\d\mm\leq - \int \sigma^{(1-t)}_{K,N'}(\sfd(\gamma_0,\gamma_1))\rho_0^{-\frac1{N'}}+\sigma^{(t)}_{K,N'}(\sfd(\gamma_0,\gamma_1))\rho_1^{-\frac1{N'}}\,\d\ppi(\gamma),
\end{equation}
where for any $t\in[0,1]$ we  have written $(\e_t)_\sharp\ppi=\rho_t\mm+\mu_t^s$  with $\mu_t^s \perp \mm$.

What is different in the $\CD(K,N)$ definition is the choice of the weights $\sigma$. 
In the particular case $K=0$ the $\CD^*(0,N)$ condition is the same as the $\CD(0,N)$ one.

We will in fact only need to show that our example fails the $\CD(K,\infty)$ condition.
For defining the $\CD(K,\infty)$ condition we will need the entropy
\[
 \ent_\infty(\mu) = \int_X\rho\log\rho\,\d\mm,
\]
if $\mu = \rho\mm$ is absolutely continuous with respect to $\mm$ and $\ent_\infty(\mu) = \infty$ otherwise.
We say that a metric measure space  $(X,\sfd,\mm)$
 satisfies the $\CD(K,\infty)$ condition if for any two measures $\mu_0, \mu_1 \in \prob X$ with support 
bounded and contained in $\supp(\mm)$ there
exists a measure $\ppi \in \gopt(\mu_0,\mu_1)$ such that for every $t \in [0,1]$ we have
\[
 \ent_\infty(\mu_t) \le (1-t)\ent_\infty(\mu_0) + t \ent_\infty(\mu_1) - \frac{K}{2}t(1-t)W_2^2(\mu_0,\mu_1),
\]
where we have written $\mu_t = (\e_t)_\sharp\ppi$.

A complete geodesic metric measure space  $(X,\sfd,\mm)$ is said to satisfy $\CD^*(K,N)$ locally if for any $x \in X$
there exists a radius $r>0$ so that for any $\mu_0, \mu_1 \in \prob X$ with supports in $B(x,r)$
there is a measure $\ppi \in \gopt(\mu_0,\mu_1)$ such that for every $t \in [0,1]$
and $N' \geq  N$ we have \eqref{eq:CD-def}.

\subsection{Approximate differentiability and the Jacobian equation}\label{sec:appdif}

Given two absolutely continuous measures $\mu_0, \mu_1 \in \probt{\R^2}$ and an optimal map $T \colon \R^2 \to \R^2$
pushing $\mu_0$ to $\mu_1$,
our aim is to express the density $\rho_1$ of $\mu_1$ using the density $\rho_0$ of $\mu_0$ and the mapping $T$.
Assuming $T$ to be one-to-one and smooth, this expression is the standard Jacobian equation
\begin{equation}\label{eq:jac}
 \rho_1(T(x,y))J_T(x,y) = \rho_0((x,y)) \qquad \text{for }\mu_0\text{-almost every }(x,y),
\end{equation}
where $J_T(x,y)$ is the absolute value of the Jacobian determinant of $T$.
A way to relax the assumptions on $T$ to be one-to-one and smooth is to require it to be one-to-one
almost everywhere and approximately differentiable, see for instance the book by Ambrosio, Gigli and Savar\'e 
\cite[Lemma 5.5.3]{Ambrosio-Gigli-Savare08} for a precise statement.

Recall that a mapping $f \colon U \to \R^m$, $U \subset \R^n$ open, is called \emph{approximately differentiable at} $x \in U$
if there exists a measurable function $\tilde f \colon U \to \R^m$ which is differentiable at $x$ and for which
\[
 \lim_{r \to 0}\frac{\LL_n(\{z \in B(x,r)\,:\,f(z) = \tilde f(z)\})}{\LL_n(B(x,r))} = 1.
\]
The approximate differential of $f$ at $x$ is defined to be that of $\tilde f$ at $x$. Correspondingly we define
the approximate partial derivatives (of the components), denoted simply by $\frac{\partial f_i}{\partial z_i}$.

Approximate differentiability for $T$ would follow from the almost everywhere existence of approximate
partial derivatives, see Federer's book \cite[Theorem 3.1.4]{Federer}. However, our mapping will not in general have approximate partial derivatives in all the directions.
Due to the special structure of our optimal maps the following easy version will suffice.
In the proposition below, and later on, we write the components of a map $f \colon \R^2 \to \R^2$ 
as $f_1$ and $f_2$. In other words $f(x,y) = (f_1(x,y),f_2(x,y))$.

\begin{proposition}\label{prop:appjac}
 Let $\mu_0, \mu_1 \in \probt{\R^2}$ be absolutely continuous with respect to $\LL_2$ with densities $\rho_0$ and $\rho_1$,
 respectively, and let $f \colon \R^2 \to \R^2$ be a map
 such that $\mu_1 = f_\sharp \mu_0$ and $f$ is one-to-one outside a set of measure zero.
 Suppose that $f_1(x,y) = f_1(x)$, i.e. $f_1$ does not depend on $y$.
 Suppose also that $f_1$ is increasing in $x$ and that $f_2(x,y)$ is increasing in $y$ for almost every $x \in \R$.
 Then \eqref{eq:jac} holds with $J_f(x,y)=\frac{\partial f_1}{\partial x}\frac{\partial f_2}{\partial y}$.
\end{proposition}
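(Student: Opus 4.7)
The plan is to exploit the triangular structure of $f$ by disintegrating $\mu_0$ and $\mu_1$ along the first coordinate, thereby reducing the two-dimensional Jacobian identity to two one-dimensional change-of-variables formulas, each of which follows from the classical a.e.\ differentiability of monotone functions.

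First I would set $\nu_i := (\pi_1)_\sharp \mu_i$ for $i = 0, 1$, where $\pi_1(x,y) = x$. Since $f_1$ depends only on $x$, the relation $\pi_1 \circ f = f_1 \circ \pi_1$ forces $(f_1)_\sharp \nu_0 = \nu_1$. Because the $\mu_i$ are absolutely continuous with densities $\rho_i$, the marginals $\nu_i$ are absolutely continuous with densities $\bar\rho_i(x) := \int \rho_i(x,y)\,dy$, and the disintegration of $\mu_i$ with respect to $\pi_1$ takes the explicit form $\mu_i = \bar\rho_i(x)\,dx \otimes \mu_i^x$ with conditional densities $\rho_i^x(y) = \rho_i(x,y)/\bar\rho_i(x)$ on the fibre $\{x\}\times\R$. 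The almost-everywhere injectivity of $f$ combined with its triangular structure then implies that for $\nu_0$-a.e.\ $x$, the one-dimensional slice map $y \mapsto f_2(x,y)$ pushes $\mu_0^x$ onto $\mu_1^{f_1(x)}$.

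Since $f_1$ is monotone, it is differentiable a.e., and the classical one-dimensional change-of-variables formula yields $\bar\rho_1(f_1(x))\,f_1'(x) = \bar\rho_0(x)$ for $\LL_1$-a.e.\ $x$. Likewise, for $\nu_0$-a.e.\ $x$ the monotone map $y \mapsto f_2(x,y)$ is a.e.\ differentiable in $y$ and produces $\rho_1^{f_1(x)}(f_2(x,y))\,\partial_y f_2(x,y) = \rho_0^x(y)$ for $\LL_1$-a.e.\ $y$. Multiplying these two identities and substituting the definitions of the conditional densities gives $\rho_1(f(x,y)) \cdot \partial_x f_1(x) \cdot \partial_y f_2(x,y) = \rho_0(x,y)$ for $\LL_2$-a.e.\ $(x,y)$ in the set where $\rho_0 > 0$, which is exactly \eqref{eq:jac} with $J_f = \partial_x f_1 \cdot \partial_y f_2$.

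The main subtleties I expect to encounter are the measure-theoretic bookkeeping. First, I must justify carefully that the slice pushforward $(f_2(x,\cdot))_\sharp \mu_0^x = \mu_1^{f_1(x)}$ holds for $\nu_0$-a.e.\ $x$; this should follow from the essentially injective, triangular nature of $f$ together with the uniqueness of disintegrations. Second, I need to upgrade the fibrewise "a.e.\ $y$" statement into a genuine $\LL_2$-a.e.\ statement via Fubini, which amounts to checking that the exceptional set of bad $x$'s together with the fibrewise exceptional $y$'s assemble into a Lebesgue-null subset of $\R^2$. Apart from these considerations, the argument is a straightforward reduction to the standard 1D theory, with no need to invoke the more general approximate-differentiability machinery of \cite[Lemma 5.5.3]{Ambrosio-Gigli-Savare08}.
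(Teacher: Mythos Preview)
Your proposal is correct and essentially the same as the paper's proof: both exploit the triangular structure of $f$ to reduce the Jacobian identity to two one-dimensional monotone change-of-variables formulas, one for $f_1$ in $x$ and one for $f_2(x,\cdot)$ in $y$. The only difference is packaging—the paper does this by testing against an arbitrary measurable set $A$ and applying Fubini together with the two 1D substitutions directly, whereas you phrase the same computation in the language of disintegration along $\pi_1$ and conditional densities; the underlying analysis and the subtleties you flag (fibrewise pushforward identification, assembling the a.e.\ statements via Fubini) are identical.
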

\begin{proof}
 Because $f_1(x)$ is increasing in $x$ and $f_2(x,y)$ is increasing in $y$ for almost every $x$,
 $f_1$ is almost everywhere approximately differentiable and $f_2$ has an approximate partial derivative
 in the $y$-direction at almost every point.

 Take a measurable $A \subset \R^2$ and write $A_x = \{y \in \R\,:\, (x,y) \in A\}$. Since
 $\mu_0$ and $\mu_1$ are absolutely continuous with respect to $\LL_2$ and $f$ is one-to-one
 outside a set of measure zero, we have
 \begin{align*}
   \int_A \rho_0((\bar x, \bar y)) \,\d \LL_2(\bar x ,\bar y) = \mu_0(A)
   &  = \mu_1(f(A)) = \int_{f(A)} \rho_1(\tilde x, \tilde y)\,\d\tilde y\,\d\tilde x \\
  & = \int_{-\infty}^\infty \int_{A_{\bar x}} \rho_1(f(\bar x,\bar y))\frac{\partial f_2}{\partial y}(\bar x,\bar y)\,\d \bar y
                \frac{\partial f_1}{\partial x}(\bar x) \,\d \bar x\\
  & = \int_A \rho_1(f(\bar x,\bar y))\frac{\partial f_2}{\partial y}(\bar x,\bar y)
                \frac{\partial f_1}{\partial x}(\bar x, \bar y) \,\d \LL_2(\bar x, \bar y).
 \end{align*}
 The claim follows from this.
\end{proof}

\section{Details of the example}\label{sec:verify}

Most of this section is devoted to verifying the local $\CD(0,4)$ condition in Example \ref{ex:1}. The plan is to use the Jacobian equation
to estimate the density along a chosen geodesic in $\probt{\R^2}$. Before arriving at this we will
first show that we have an optimal map $T$ between two absolutely continuous measures $\mu_0$ and $\mu_1$,
that this map is essentially one-to-one and that it can be used in a Jacobian equation. Using the optimal map $T$
we will then select a midpoint measure whose support is still inside our domain. Here we also have to make sure that the
map sending an initial point to the midpoint is essentially one-to-one. Finally we will verify that this midpoint measure
satisfies $\CD(0,4)$. At the very end we will also indicate why the global $\CD(K,\infty)$ condition fails.

\subsection{Definition of the local domain}

Since Theorem \ref{thm:CD02maps} is proven by approximating the norm $||\cdot||_\infty$ with
strictly convex norms, the $\CD(0,4)$ condition (in fact the $\CD(0,2)$ condition) holds inside any domain that is convex in the Euclidean sense.
What needs to be done is to verify the $\CD(0,4)$ condition inside domains of the type shown in Figure \ref{fig:2}.

\begin{figure}
   \psfrag{x}{$(x,y)$}
   \psfrag{y}{$(x,S(x))$}
   \psfrag{e}{$d + \frac{b-a}2$}
   \psfrag{a}{$a$}
   \psfrag{b}{$b$}
   \psfrag{c}{$c$}
   \centering
   \includegraphics[width=0.5\textwidth]{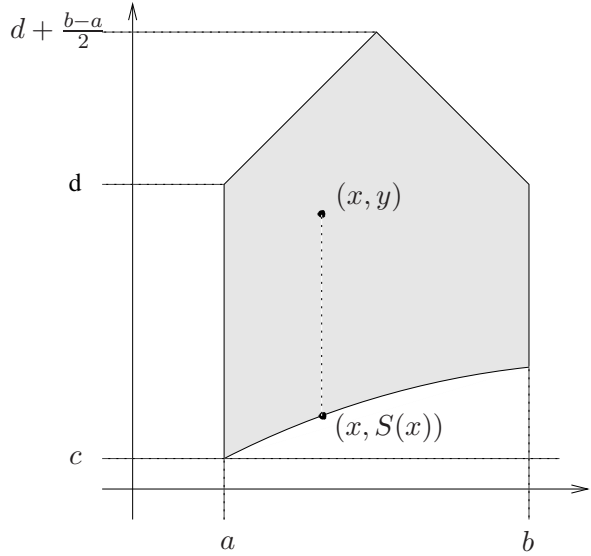}
   \caption{The local domain $E$ where we verify the $\CD(0,4)$ condition.}
   \label{fig:2}
\end{figure}

Referring to Figure \ref{fig:2} for the notation,
the width $b-a$ and the height $d + \frac{b-a}2 - c$ of the domain $E$ are assumed to be less than $\frac1{64}$.
The bottom of the domain
is a piece of a sphere with radius one and whose center $(x_c,y_c)$ satisfies
\begin{equation}\label{eq:spherecenter}
 |x - x_c| < \frac12(y - y_c) \qquad \text{for all }(x,y) \in E. 
\end{equation}

Let for every $(x,y) \in [a,b] \times \R$
the point $(x,S(x))$ be the vertical projection to the lower (circular) boundary 
of $E$, see Figure \ref{fig:2}. Notice that $S(x)$ only depends on $x$. Our assumption \eqref{eq:spherecenter} guarantees
\begin{equation}\label{eq:smallverticaldifference}
 S\left(\frac{x_1+x_2}{2}\right) - \frac{S(x_1)+S(x_2)}{2} \le \frac{|x_1-x_0|^2}{2} \qquad \text{for all } x_1,x_2 \in [a,b]. 
\end{equation}

\subsection{Preliminary reductions and definitions}

Let us now mention two simplifications that we can always make when checking the $\CD^*(K,N)$ condition. We will return to both of them
in more detail at the end of the paper when we finally prove the $\CD(0,4)$ condition.
The first standard reduction in checking the $\CD^*(K,N)$ condition is to assume the measures to be absolutely continuous
with respect to the reference measure. This reduction is possible because we can approximate any probability measure
in the $W_2$-distance by an absolutely continuous measure without increasing the entropy.

The second standard simplification we make is that we only define the midpoint between any two given measures.
This has been used for example by Bacher and Sturm \cite{BacherSturm10} and the author \cite{R2013}.
We can then iterate the procedure of taking midpoints and use the lower semi-continuity of the entropy to have the
correct entropy bound along the whole geodesic.

Let us then turn to the notation and definitions that are less standard than the ones we recalled in Section \ref{sec:preli}.
Given a metric space $(X,\sfd)$, for $z_0,z_1 \in X$ we denote the set of all the midpoints between $z_0$ and $z_1$ by
\[
 \midp(z_0,z_1) := \left\{z\in X \,:\, \sfd(z_0,z) = \sfd(z_1,z) = \frac12 \sfd(z_0,z_1)\right\}.
\]
We will not make the distance $\sfd$ visible in the notation because $\midp$ will only be used for $(\R^2, ||\cdot||_\infty)$
and $(\probt{\R^2}, W_2)$, and for those no confusion should arise.

In the following we will often consider separately the part of the transport that moves more in the horizontal (or vertical) direction.
To set some notation define the set of horizontal transportation
\[
 H := \left\{((x_0,y_0),(x_1,y_1)) \in \R^2 \times \R^2 \,:\,|x_0 -x_1| > |y_0 - y_1|\right\},
\]
the set of vertical transportation
\[
 V := \left\{((x_0,y_0),(x_1,y_1)) \in \R^2 \times \R^2 \,:\,|x_0 -x_1| < |y_0 - y_1|\right\}
\]
and the set of diagonal transportation
\[
 D := \left\{((x_0,y_0),(x_1,y_1)) \in \R^2 \times \R^2 \,:\,|x_0 -x_1| = |y_0 - y_1|\right\}.
\]
Given any $\ggamma \in \prob{\R^2 \times \R^2}$,
the restricted measure $\ggamma\restr{H}$ moves every infinitesimal mass more in the horizontal direction than the vertical,
$\ggamma\restr{V}$ the other way around, and $\ggamma\restr{D}$ moves mass in the diagonal directions.

\subsection{Selecting an optimal map}

One possible way of trying to obtain the needed optimal maps could be to analyse the proof of Theorem \ref{thm:CD02maps},
or the $\CD(0,2)$ condition in $(\R^2,||\cdot||_\infty,\LL_2)$. 
However, we chose a more direct approach of first selecting a suitable optimal transport plan via three consecutive minimizations
and then showing that this plan has all the desired properties.
The idea behind the three minimizations is that the $l^\infty$-norm allows locally a lot of freedom for the coordinate
in which the mass is transported less.  By doing extra minimization on the two directions separately after the main minimization,
we will increase the monotonicity properties of the optimal transport. 

The idea of using consecutive minimizations to choose a better optimal transport plan goes back to
\cite{AmbrosioKirchheimPratelli, Champion-DePascale11} where the existence of optimal maps from absolutely continuous 
measures in $\R^n$ for cost functions of the form $c(x,y) = ||x-y||$ was proven - first with
any crystalline norm $||\cdot||$ by Ambrosio, Kirchheim and Pratelli in \cite{AmbrosioKirchheimPratelli}
and then with any norm $||\cdot||$ by Champion and De Pascale in \cite{Champion-DePascale11}.
Let us also note that the existence of an optimal map in our case with $c(x,y) = ||x-y||_\infty^2$ has been proven
by Carlier, De Pascale and Santambrogio in \cite{CarlierDePascaleSantambrogio}.
We will prove here the existence of a specific optimal transport map using the consecutive minimizations in order to keep the paper
reasonably self-contained and, more importantly, in order to guarantee that the chosen optimal plan has all the needed cyclical 
monotonicity properties.


Let us give the three minimizations. Suppose that $\mu_0, \mu_1 \in \probt{\R^2}$ are given.
Let $\Op_1(\mu_0,\mu_1) \subset \prob{\R^2 \times \R^2}$ be the set of those $\gamma$ that minimize
\begin{equation}\label{eq:firstmin}
  \int_{\R^2 \times \R^2} ||z_1-z_2||^2 \d\ggamma(z_1,z_2)
\end{equation}
and satisfy $(\ppi_1)_\sharp\ggamma = \mu_0$ and $(\ppi_2)_\sharp\ggamma = \mu_1$.
The set $\Op_1(\mu_0,\mu_1)$ is a nonempty closed and convex subset of $\prob{\R^2 \times \R^2}$.
Next let $\Op_2(\mu_0,\mu_1)  \subset \Op_1(\mu_0,\mu_1)$ be the set of those $\gamma$ that minimize
\begin{equation}\label{eq:secondmin}
  \int_{\R^2 \times \R^2} |x_1-x_2|^2 \d\ggamma((x_1,y_1),(x_2,y_2)).
\end{equation}
Again $\Op_2(\mu_0,\mu_1)$ is a nonempty closed and convex subset of $\prob{\R^2 \times \R^2}$.
Finally let $\Op_3(\mu_0,\mu_1) \subset \Op_2(\mu_0,\mu_1)$ be the set of those $\gamma$ that minimize
\begin{equation}\label{eq:finalmin}
  \int_{\R^2 \times \R^2} |y_1-y_2|^2 \d\ggamma((x_1,y_1),(x_2,y_2)).
\end{equation}
Clearly also $\Op_3(\mu_0,\mu_1)$ is nonempty. We will see in Proposition \ref{prop:exmap} that in the case
$\mu_0 \ll \LL_2$ the set  $\Op_3(\mu_0,\mu_1)$ consists of only one optimal plan which is given by a map. Before this, let us list the
cyclical monotonicity properties we immediately get from the three minimizations.

\begin{lemma}\label{lma:cycmon}
 Let $\mu_0,\mu_1 \in \probt \R^2$ and $\ggamma \in \Op_3(\mu_0,\mu_1)$. Then there exists a set $\Gamma \subset \R^2 \times \R^2$
 of full $\ggamma$-measure such that for all $(z_1,w_1), (z_2,w_2) \in \Gamma$ we have
 \begin{equation}\label{eq:comcyc}
   ||z_1 - w_1||^2 + ||z_2 - w_2||^2 \le ||z_1 - w_2||^2 + ||z_2 - w_1||^2
 \end{equation}
 and for all $((x_1,y_1),(x_1',y_1')), ((x_2,y_2),(x_2',y_2')) \in \Gamma$ we have
 \begin{equation}\label{eq:vercyc}
  \begin{split}|y_1 - y_1'|^2 + |y_2 - y_2'|^2 \le |y_1 - y_2'|^2 + |y_2 - y_1'|^2,\\
           \text{if } |x_1-x_1'|^2 + |x_2-x_2'|^2 = |x_2-x_1'|^2 + |x_1-x_2'|^2  
  \end{split} 
 \end{equation}
 and 
 \begin{equation}\label{eq:horcyc}
 \begin{split}
  |x_1 - x_1'|^2 + |x_2 - x_2'|^2 \le |x_1 - x_2'|^2 + |x_2 - x_1'|^2,\\
        \text{if } |y_1-y_1'|^2 + |y_2-y_2'|^2 = |y_2-y_1'|^2 + |y_1-y_2'|^2.
 \end{split}
 \end{equation}
\end{lemma}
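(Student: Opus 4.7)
I would prove the three inequalities in the natural order, exploiting the nested structure $\Op_3\subset\Op_2\subset\Op_1$ of the minimizations. A preliminary algebraic identity, $(\alpha_1-\alpha_2)(\beta_1-\beta_2)=\alpha_1\beta_1+\alpha_2\beta_2-\alpha_1\beta_2-\alpha_2\beta_1$, applied to squared differences, shows that the hypothesis of \eqref{eq:vercyc} is equivalent to $(x_1-x_2)(x_1'-x_2')=0$ and that of \eqref{eq:horcyc} to $(y_1-y_2)(y_1'-y_2')=0$; in either case, the corresponding sum of squared $x$- (resp.\ $y$-) differences is invariant under the 2-swap $w_1\leftrightarrow w_2$. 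Inequality \eqref{eq:comcyc} then comes for free from the standard theorem that an optimal transport plan is $c$-cyclically monotone on a full-measure Borel set, applied to $\ggamma\in\Op_1$ with cost $c(z,w)=\|z-w\|_\infty^2$.

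To handle \eqref{eq:horcyc} and \eqref{eq:vercyc} I would introduce the perturbed cost
\[
 c_{\varepsilon_1,\varepsilon_2}(z,w)=\|z-w\|_\infty^2+\varepsilon_1|x_z-x_w|^2+\varepsilon_2|y_z-y_w|^2
\]
and verify by a direct comparison that $\ggamma\in\Op_3$ is $c_{\varepsilon_1,\varepsilon_2}$-optimal whenever $0<\varepsilon_2\le\varepsilon_1\le 1$: an arbitrary competitor $\tilde\ggamma$ first loses against $\ggamma$ in $\|\cdot\|_\infty^2$ by $\Op_1$-minimality; in the equality case it loses in $|x|^2$ by $\Op_2$-minimality; and in the further equality case it loses in $|y|^2$ by $\Op_3$-minimality. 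Choosing a common Borel set $\Gamma$ of full $\ggamma$-measure on which $c_{\varepsilon_1,\varepsilon_2}$-cyclical monotonicity holds for a countable dense family of admissible parameters, the 2-cycle version reads, for any two pairs from $\Gamma$,
\[
 \Delta_\infty+\varepsilon_1\Delta_x+\varepsilon_2\Delta_y\ge 0,
\]
where $\Delta_\infty,\Delta_x,\Delta_y$ denote the increments in the main, $x$- and $y$-quadratic costs under the swap.

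Under the hypothesis of \eqref{eq:horcyc} the preliminary identity gives $\Delta_y=0$. The key step is a case analysis showing that, under the coincidence $y_1=y_2$ (or $y_1'=y_2'$), the max in each of the four $\ell^\infty$-evaluations can be resolved according to which coordinate realizes it, and that on a $\ggamma$-full measure subset of $\Gamma\times\Gamma$ this also forces $\Delta_\infty=0$; letting $\varepsilon_1\downarrow 0$ in the inequality above then delivers exactly $\Delta_x\ge 0$, which is \eqref{eq:horcyc}. The argument for \eqref{eq:vercyc} is symmetric: its hypothesis gives $\Delta_x=0$, and combined with the $\Delta_\infty=0$ obtained from an analogous case analysis one recovers $\Delta_y\ge 0$. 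The technical heart, and main obstacle, is precisely this last case analysis inside the $\ell^\infty$-structure; once it is in place, everything else is either an elementary algebraic identity or a standard application of the cyclical monotonicity machinery.
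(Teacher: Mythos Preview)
The paper gives no proof here; it simply announces \eqref{eq:comcyc}--\eqref{eq:horcyc} as ``the cyclical monotonicity properties we immediately get from the three minimizations.'' The intended argument is the direct one: \eqref{eq:comcyc} is ordinary $c$-cyclical monotonicity for $\Op_1$, while the secondary and tertiary minimizations give, on a full-measure set, the constrained monotonicities ``if a $2$-swap leaves the $\|\cdot\|_\infty^2$-cost unchanged then it does not decrease the $|x|^2$-cost'' and ``if it leaves both unchanged then it does not decrease the $|y|^2$-cost''---because such a swap produces a competitor lying in $\Op_1$ (resp.\ $\Op_2$), against which $\Op_2$- (resp.\ $\Op_3$-) minimality applies. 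Your perturbed-cost detour is not needed for this.

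Your proposal has two genuine gaps. First, the lexicographic comparison you sketch does \emph{not} show that $\ggamma\in\Op_3$ is $c_{\varepsilon_1,\varepsilon_2}$-optimal for any fixed $\varepsilon_1,\varepsilon_2>0$: when $\int\|\cdot\|_\infty^2\,\d\tilde\ggamma$ is strictly larger you learn nothing about the lower-order integrals, and for a fixed $\varepsilon$ they can overcompensate. Second---and this is fatal to the plan---the deferred ``case analysis'' forcing $\Delta_\infty=0$ from the hypothesis of \eqref{eq:horcyc} cannot work. Take $z_1=(0,0)$, $z_2=(1,0)$, $w_1=(3,4)$, $w_2=(2,\tfrac12)$: here $y_1=y_2$, so the hypothesis of \eqref{eq:horcyc} holds, yet $\Delta_\infty=3>0$ and $\Delta_x=-2<0$; and the plan $z_i\mapsto w_i$ is the \emph{unique} element of $\Op_1$ (hence of $\Op_3$) between the corresponding two-atom marginals, so both pairs lie in every full-measure $\Gamma$ and no null-set exclusion helps. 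This example shows that \eqref{eq:horcyc}, read literally, is actually stronger than what the nested minimizations deliver; note, however, that in every application (Proposition~\ref{prop:exmap}, Lemma~\ref{lma:monotone}) the paper first checks the $\|\cdot\|_\infty^2$-equality before invoking \eqref{eq:vercyc} or \eqref{eq:horcyc}, so the versions with hypothesis $\Delta_\infty=0$ (and, for \eqref{eq:vercyc}, additionally $\Delta_x=0$)---precisely what the direct argument yields---are all that is ever used.
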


Let us then prove that in the case $\mu_0 \ll \LL_2$ the optimal plan in $\Op_3(\mu_0,\mu_1)$ is given by a map.
This is a fairly standard consequence of Lemma \ref{lma:cycmon}, so we present only parts of the proof to give the idea.

\begin{proposition}\label{prop:exmap}
 Suppose $\mu_0 \ll \LL_2$. Then $\Op_3(\mu_0,\mu_1)$ is a singleton and its only element is induced by an optimal map $T$.
\end{proposition}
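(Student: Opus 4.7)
The proof is a standard disintegration-plus-cyclical-monotonicity argument in the spirit of \cite{AmbrosioKirchheimPratelli, Champion-DePascale11}, with the twist that the $l^\infty$-norm is not strictly convex, so the full-cost monotonicity \eqref{eq:comcyc} alone does not pin down the destination and all three inequalities of Lemma~\ref{lma:cycmon} must be used in tandem. To begin I would pick $\ggamma \in \Op_3(\mu_0,\mu_1)$ and a Borel set $\Gamma \subset \R^2 \times \R^2$ with $\ggamma(\Gamma)=1$ on which \eqref{eq:comcyc}, \eqref{eq:vercyc}, \eqref{eq:horcyc} hold simultaneously, and disintegrate $\ggamma = \int \delta_z \otimes \ggamma_z \,\d\mu_0(z)$, so that $\supp \ggamma_z \subset \Gamma_z := \{w : (z,w)\in \Gamma\}$ for $\mu_0$-a.e.\ $z$.

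A short algebraic expansion turns the hypothesis/conclusion pair of \eqref{eq:horcyc} into the implication $(y_1-y_2)(y_1'-y_2')=0 \Longrightarrow (x_1-x_2)(x_1'-x_2')\ge 0$, and \eqref{eq:vercyc} into the analogous implication with the roles of $x$ and $y$ exchanged. Consequently, whenever two source points of $\Gamma$ share their $y$-coordinate (respectively $x$-coordinate), their target $x$-coordinates (respectively $y$-coordinates) are monotone nondecreasing in the remaining source coordinate.

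The heart of the argument is combining this with $\mu_0 \ll \LL_2$. By Fubini, for $\LL_1$-a.e.\ $x$ the conditional of $\mu_0$ on the vertical slice $\{x\}\times \R$ is absolutely continuous on the line. The implication above, applied on such a slice, forces the set-valued map $y \mapsto \pi_2(\Gamma_{(x,y)})$ to be monotone nondecreasing; together with the absolute continuity of the conditional, this pins down a Borel function $T_2(x,y)$ coinciding with the one-dimensional monotone rearrangement of the relevant conditional marginals. A symmetric argument along horizontal slices, using \eqref{eq:horcyc} together with the full-cost inequality \eqref{eq:comcyc}, shows that the first target coordinate depends only on $x$ and is the monotone rearrangement of the $x$-marginals of $\mu_0$ and $\mu_1$, producing a Borel function $T_1(x)$. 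Setting $T = (T_1,T_2)$ then gives $\ggamma = (\id \times T)_\sharp \mu_0$; because the construction outputs the same $T$ for every element of $\Op_3(\mu_0,\mu_1)$, the set is a singleton.

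The delicate step is removing the $y$-dependence from $T_1$. The hypotheses $y_1=y_2$ or $y_1'=y_2'$ appearing in \eqref{eq:horcyc} only hold on measure-zero subsets of $\Gamma\times \Gamma$, so one must disintegrate along horizontal lines and use the absolute continuity of $\mu_0$ to promote the slice-wise $x$-monotonicity to a $\mu_0$-a.e.\ identity, invoking \eqref{eq:comcyc} to rule out any genuine dependence on $y$. This is where the argument would need the most care; everything else is a routine cyclical-monotonicity argument, which is presumably why the author gives only parts of the proof.
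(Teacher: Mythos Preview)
Your slice-by-slice approach is genuinely different from the paper's. The paper argues by contradiction: it assumes $\ggamma_z$ fails to be a Dirac mass on a set of positive $\mu_0$-measure, splits into cases according to whether $\ggamma_z$ charges $H$, $V$, $D$ or a combination thereof, and in each case uses a Lebesgue density-point argument (this is where $\mu_0\ll\LL_2$ enters) to locate two nearby source points on a common vertical line whose targets violate one of \eqref{eq:comcyc}--\eqref{eq:horcyc}. Your idea of reading \eqref{eq:vercyc} on vertical slices and \eqref{eq:horcyc} on horizontal slices as one-dimensional monotonicity, and then invoking absolute continuity of the slice conditionals of $\mu_0$, avoids the case split; the core of it is correct and already shows that both target coordinates are single-valued $\mu_0$-a.e., hence $\ggamma_z$ is Dirac.

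Where your proposal goes wrong is the stronger claim that $T_1$ depends only on $x$. This is false in general: take $\mu_0$ uniform on $[0,1]^2$ and $\mu_1$ its push-forward under $(x,y)\mapsto(x+g(y),\,y+3)$ for any nonconstant measurable $g\colon[0,1]\to[0,1]$. Every plan in $\Op_1$ must satisfy $y'=y+3$ almost surely, and then the secondary $x$-minimisation on each horizontal slice forces $x'=x+g(y)$; thus $\Op_3$ consists of exactly this map and $T_1(x,y)=x+g(y)$ genuinely depends on $y$. (Compare Lemma~\ref{lma:partials}, where the paper only asserts that $T_1$ is locally constant in $y$ inside $H$.) Your uniqueness argument hinges on this false claim: you say the construction outputs the same $T$ for every $\ggamma\in\Op_3$, but the slice-wise monotone rearrangement is taken with respect to target conditionals that are themselves determined by $\ggamma$, so a priori different plans could yield different rearrangements. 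Uniqueness follows instead from the convexity argument the paper gives first: if $\Op_3$ contained two distinct plans, their average would also lie in $\Op_3$ but could not be induced by a map. Dropping the $T_1=T_1(x)$ claim and replacing your uniqueness step by this convexity argument, your outline becomes a valid alternative proof.
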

\begin{proof}
 The fact that $\Op_3(\mu_0,\mu_1)$ is a singleton follows once we know that any element in $\Op_3(\mu_0,\mu_1)$ is 
 induced by an optimal map. Indeed, if there were two different measures $\ggamma_1, \ggamma_2 \in \Op_3(\mu_0,\mu_1)$,
 then by convexity also $\ggamma_3 = \frac12(\ggamma_1 + \ggamma_2) \in \Op_3(\mu_0,\mu_1)$. However, the measure $\ggamma_3$
 would not be induced by a map.

 Suppose now that there exists $\ggamma \in \Op_3(\mu_0,\mu_1)$ that is not induced by a map. Then the disintegration $\ggamma_z$ of
 $\ggamma$ with respect to $\ppi_1$ is not a Dirac mass for a $\mu$-positive set of points $z \in \R^2$.
 Now there are several cases to check. We use different cyclical monotonicities to arrive at a contradiction in each of the cases.
 The different cases are:
 \begin{enumerate}
  \item[(i)] $\ggamma_z(H) > 0$ and $\ggamma_z(V)>0$ for a $\mu_0$-positive set of $z$.
  \item[(ii)] $\gamma_z\restr{H}$, $\gamma_z\restr{V}$ or $\gamma_z\restr{D}$ is not a Dirac mass for a $\mu_0$-positive set of $z$.
  \item[(iii)] $\gamma_z(D) > 0$ and $\ggamma_z(H) > 0$ (or $\ggamma_z(V)>0$) for a $\mu_0$-positive set of $z$.
 \end{enumerate}
 The contradiction follows from all of the cases in a similar way.
 We will only give details in the first case. Thus assume that $\ggamma_z(H) > 0$ and $\ggamma_z(V)>0$ for a $\mu_0$-positive set of $z$.
 Let $\Gamma \subset \R^2 \times \R^2$ be the set from Lemma \ref{lma:cycmon} having all the cyclical monotonicity properties.
 Suppose that the set
 \[
   \{z \in \R^2\,:\,\ggamma_z(H\cap\Gamma) > 0 \text{ and }\ggamma_z(V\cap\Gamma)>0\}
 \]
 has positive $\mu_0$-measure. Now there exist $\epsilon,\delta > 0$ and $(x_h,y_h), (x_v,y_v) \in \R^2$ so that
 \[
  ||(x_h,y_h) - (x_v,y_v)|| \ge 4\delta  
 \]
 and the set
 \begin{align*}
 A = \big\{z \in& \R^2\,:\,\ggamma_z\left(\{((x_1,y_1),(x_2,y_2)) \,:\, |y_1-y_2| < |x_1 - x_2| - \epsilon\}\cap\Gamma\cap \R^2 \times B((x_h,y_h),\delta)\right) > 0\\
                         &\text{ and } \ggamma_z\left(\{((x_1,y_1),(x_2,y_2)) \,:\, |x_1-x_2| < |y_1 - y_2| - \epsilon\}\cap\Gamma\cap \R^2 \times B((x_v,y_v),\delta)\right)>0\big\}  
 \end{align*}
 has positive $\mu_0$-measure. Let $(\bar x, \bar y)$ be a density point of $A$.
 By symmetry, assume $||(\bar x, \bar y) - (x_v,y_v)|| \ge 2 \delta$.
 Because $(\bar x, \bar y)$ is a density point, for some $x \in [\bar x - \frac\epsilon2 ,\bar x + \frac\epsilon2]$ 
 there exist $y_1,y_2 \in [\bar y - \frac\epsilon2 ,\bar y + \frac\epsilon2]$, $y_1 \ne y_2$,
 such that $(x,y_1), (x,y_2) \in A$. We may assume that $|y_v - y_2| < |y_v - y_1|$.
 Let $(x_{h,2},y_{h,2}) \in B((x_h,y_h),\delta)$ and $(x_{v,1},y_{v,1}) \in B((x_v,y_v),\delta)$
 be such that
 \[
  ((x,y_1),(x_{v,1},y_{v,1})), ((x,y_2),(x_{h,2},y_{h,2})) \in \Gamma,
 \]
 \[
  |y_2-y_{h,2}| < |x - x_{h,2}| - \epsilon \quad \text{and} \quad |x-x_{v,1}| < |y_1 - y_{v,1}| - \epsilon .
 \]
 But now
 \begin{align*}
  ||(x,y_2) -(x_{v,1},y_{v,1})||^2 & + ||(x,y_1) - (x_{h,2},y_{h,2})||^2\\
& = |y_2-y_{v,1}|^2 + |x-x_{h_2}|^2 < |y_1-y_{v,1}|^2 + |x-x_{h_2}|^2 \\
& = ||(x,y_1) -(x_{v,1},y_{v,1})||^2 + ||(x,y_2) - (x_{h,2},y_{h,2})||^2
 \end{align*}
 contradicting the cyclical monotonicity \eqref{eq:comcyc} of $\Gamma$.
 This proves the claim in the case (i).

 In the case (ii) we argue similarly and use the cyclical monotonicities \eqref{eq:comcyc} and \eqref{eq:vercyc}
 if  $\gamma_z\restr{H}$ is not Dirac, \eqref{eq:comcyc} and \eqref{eq:horcyc} if  $\gamma_z\restr{V}$ is not,
 and \eqref{eq:comcyc} if $\gamma_z\restr{D}$ is not.
 In the case (iii) we use \eqref{eq:vercyc} if $\gamma_z(D) > 0$ and $\ggamma_z(H) > 0$,
 and \eqref{eq:horcyc} if $\gamma_z(D) > 0$ and $\ggamma_z(V) > 0$.
\end{proof}

Next we list some properties of the map $T$ in the case $\mu_0, \mu_1 \ll \LL_2$.

\begin{lemma}\label{lma:monotone}
 Let $\mu_0 \ll \LL_2$, $T$ the map from Proposition \ref{prop:exmap} and $\Gamma$ the set from Lemma \ref{lma:cycmon}.
 Then for all $(x,y_1),(x,y_2),(x_1,y),(x_2,y) \in \{(x,y) \in \R^2 \,:\, ((x,y),T(x,y)) \in \Gamma\}$ we have the following.
 \begin{equation}\label{eq:correctorder1}
  \text{If }y_1 \ne y_2 \text{ and }T_1(x,y_1) = T_1(x,y_2), \text{ then }\frac{T_2(x,y_1) - T_2(x,y_2)}{y_1 - y_2} \ge 0
 \end{equation}
 and 
\begin{equation}\label{eq:correctorder2}
  \text{if }x_1 \ne x_2 \text{ and }T_2(x_1,y) = T_2(x_2,y), \text{ then }\frac{T_1(x_1,y) - T_1(x_2,y)}{x_1 - x_2} \ge 0.
 \end{equation}
\end{lemma}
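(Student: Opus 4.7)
The plan is to derive both monotonicity statements directly from the refined cyclical monotonicities \eqref{eq:vercyc} and \eqref{eq:horcyc}, which were built into $\ggamma$ precisely via the second and third minimizations \eqref{eq:secondmin} and \eqref{eq:finalmin}. Lemma \ref{lma:cycmon} has already packaged these properties on a full-measure set $\Gamma$, so the work to be done here is essentially algebraic.

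To prove \eqref{eq:correctorder1}, I would take $(x,y_1),(x,y_2)$ in the good set with $y_1 \ne y_2$ and with $T_1(x,y_1) = T_1(x,y_2) =: x'$, and apply \eqref{eq:vercyc} to the two transport pairs $((x,y_1),T(x,y_1))$ and $((x,y_2),T(x,y_2))$. The $x$-coordinate hypothesis in \eqref{eq:vercyc} is immediate: all four $x$-values are either $x$ or $x'$, so both sides of that hypothesis equal $2|x-x'|^2$. The resulting inequality
\[
 |y_1-T_2(x,y_1)|^2 + |y_2-T_2(x,y_2)|^2 \le |y_1-T_2(x,y_2)|^2 + |y_2-T_2(x,y_1)|^2
\]
expands, after cancellation of the squared terms, to
\[
 (y_1-y_2)\bigl(T_2(x,y_1)-T_2(x,y_2)\bigr) \ge 0,
\]
which is exactly the claim. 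The second assertion \eqref{eq:correctorder2} is proved in a completely symmetric fashion: one swaps the roles of the $x$- and $y$-coordinates and uses \eqref{eq:horcyc} in place of \eqref{eq:vercyc}; the $y$-hypothesis in \eqref{eq:horcyc} is again trivially satisfied because the four $y$-values involved take only two distinct values.

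I do not expect any real obstacle. One minor point worth noting is that nothing special has to be done when the $T$-images coincide, since the claimed inequality is non-strict and the expanded form of the refined cyclical monotonicity gives exactly the required sign. The content of the lemma is therefore precisely that the auxiliary minimizations in the $x$- and $y$-directions force the map $T$ to be monotone in each coordinate along the fibers where the other coordinate of the image is constant, a structural property that will be used later to apply Proposition \ref{prop:appjac}.
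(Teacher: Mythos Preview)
Your argument is correct and is in fact slightly cleaner than the paper's. You apply \eqref{eq:vercyc} directly: since $x_1=x_2=x$ and $x_1'=x_2'=T_1(x,y_1)=T_1(x,y_2)$, the $x$-hypothesis in \eqref{eq:vercyc} is automatic, and expanding the resulting inequality gives $(y_1-y_2)(T_2(x,y_1)-T_2(x,y_2))\ge 0$ immediately. The paper instead argues by contradiction: assuming the wrong order, it first invokes \eqref{eq:comcyc} to show that both transport pairs lie in $H\cup D$ (i.e.\ $|T_1-x|\ge |T_2-y|$), deduces that all four $\ell^\infty$-distances coincide, and only then appeals to \eqref{eq:vercyc} for the contradiction. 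That extra step via \eqref{eq:comcyc} is the natural thing to do if one thinks of \eqref{eq:vercyc} as coming from the third minimization (which a priori needs the swapped plan to remain in $\Op_2$, hence in $\Op_1$); but as Lemma~\ref{lma:cycmon} is stated, the hypothesis of \eqref{eq:vercyc} involves only the $x$-costs, so your direct route is legitimate and shorter. The symmetric argument for \eqref{eq:correctorder2} via \eqref{eq:horcyc} is likewise fine.
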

\begin{proof}
 Suppose that \eqref{eq:correctorder1} does not hold for some $(x,y_1),(x,y_2) \in F$.
 We may assume that $y_2 < y_1$ so that $T_2(x,y_1) < T_2(x,y_2)$. By the cyclical monotonicity
 \eqref{eq:comcyc} we have $|T_1(x,y_1)-x| \ge |T_2(x,y_1) - y_1|$ and $|T_1(x,y_2)-x| \ge |T_2(x,y_2) - y_2|$.
 Therefore
 \[
  ||T(x,y_1)-(x,y_1)|| = ||T(x,y_1)-(x,y_2)|| = ||T(x,y_2)-(x,y_2)|| = ||T(x,y_2)-(x,y_1)||.
 \]
 But now
 \[
  |T_2(x,y_1)-y_2|^2 + |T_2(x,y_2) - y_1|^2 < |T_2(x,y_1)-y_1|^2 + |T_2(x,y_2) - y_2|^2
 \]
 violating the cyclical monotonicity of \eqref{eq:vercyc}. This proves \eqref{eq:correctorder1}.
 The inequality \eqref{eq:correctorder2} follows similarly from the cyclical monotonicities \eqref{eq:comcyc}
 and \eqref{eq:horcyc}.
\end{proof}

In estimating the densities at the midpoints we will also need an infinitesimal version of Lemma \ref{lma:monotone}.
Recalling the discussion from Section \ref{sec:appdif} we would like to use a Jacobian equation
\begin{equation}\label{eq:jacagain}
  \rho_1(T(x,y))J_T(x,y) = \rho_0((x,y)) \qquad \text{for }\mu_0\text{-almost every }(x,y). 
\end{equation}

Here a few comments are in order. As we mentioned in Section \ref{sec:appdif}, usually in writing the Jacobian equation 
the mapping is assumed to be at least approximately differentiable almost everywhere.
However, the optimal map $T$ is not in general approximately differentiable.
To see this, take a measurable function $f \colon [0,1] \to [0,1]$ that is not approximately differentiable
and consider the optimal transport between the uniform measures on $[0,1]^2$ and $\{(x+3,y)\,:\,x \in [0,1],y \in [f(x),f(x)+1]\}$.

Nevertheless, because locally in $H$ we are sending vertical lines to vertical lines by cyclical monotonicity \eqref{eq:comcyc}
the first coordinate function $T_1$ is approximately differentiable almost everywhere.
Then, because of cyclical monotonicity \eqref{eq:vercyc}
the second coordinate function $T_2$ is approximately differentiable in the variable $y$ for almost every $x$.
Now, since $T_1$ was locally (approximately) constant in $y$, we get \eqref{eq:jacagain} in $H$ using Proposition \ref{prop:appjac}.
Similarly we get it also in $V$ and $D$.

\begin{lemma}\label{lma:partials}
 Let $\mu_0, \mu_1 \ll \LL_2$ and $\Op_3(\mu_0,\mu_1) = \{(\id,T)_\sharp\}$. Then the map $T$
 satisfies $\mu_0$-almost everywhere
 \begin{equation}\label{eq:nonnegderivative}
  \frac{\partial T_1}{\partial x} \ge 0 \text{ and } \frac{\partial T_2}{\partial y} \ge 0, \text{ if }((x,y),T(x,y)) \in H \cup V.
 \end{equation}
 Still $\mu_0$-almost everywhere we have that 
 \begin{equation}\label{eq:mixedzero}
  \begin{split}
    T_1 & \text{ is locally constant in }y, \text{ if }((x,y),T(x,y)) \in H\text{ and}\\
    T_2 & \text{ is locally constant in }x, \text{ if }((x,y),T(x,y)) \in V.
   \end{split}
 \end{equation}
\end{lemma}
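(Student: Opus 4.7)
My plan is to derive the sign property \eqref{eq:nonnegderivative} from 1D cyclical monotonicities on axis-aligned slices, and the local constancy \eqref{eq:mixedzero} from the uniqueness clause of Proposition~\ref{prop:exmap}. Throughout, I would work on $F := \{(x,y) : ((x,y),T(x,y)) \in \Gamma\}$, a set of full $\mu_0$-measure.

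For \eqref{eq:nonnegderivative}, take two points $(x_1,y),(x_2,y) \in F$ on the same horizontal slice. The hypothesis of \eqref{eq:horcyc} on the $y$-coordinates is then trivially satisfied, so the conclusion reduces to the standard 1D rearrangement inequality
\[
|x_1 - T_1(x_1,y)|^2 + |x_2 - T_1(x_2,y)|^2 \le |x_1 - T_1(x_2,y)|^2 + |x_2 - T_1(x_1,y)|^2,
\]
which is equivalent to $x \mapsto T_1(x,y)$ being non-decreasing. A symmetric application of \eqref{eq:vercyc} on vertical slices gives that $y \mapsto T_2(x,y)$ is non-decreasing. The classical Lebesgue theorem on a.e.\ differentiability of 1D monotone functions, combined with Fubini, then yields that $\partial T_1/\partial x$ and $\partial T_2/\partial y$ exist at $\mu_0$-a.e.\ point and are non-negative, in particular on $H \cup V$.

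For \eqref{eq:mixedzero} I would first pass to a neighborhood $U$ of a point $(x_0,y_0) \in F$ whose image is in the strict interior of $H$, chosen so that for every $(x,y) \in U \cap F$ there is a uniform gap $|x-T_1(x,y)| \ge |y-T_2(x,y)| + \epsilon$ and so that the cross $\infty$-distances between such pairs are still realized by the horizontal coordinate. In this regime the three functionals \eqref{eq:firstmin}--\eqref{eq:finalmin} are invariant under an exchange of the horizontal target coordinates between two sources sharing the same vertical line: the $\infty^2$- and $x^2$-costs depend only on the horizontal coordinates, which are merely relabeled, while the $y^2$-cost depends only on the $y$-to-$Y$ pairing, which the swap does not affect. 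If $T_1$ were not locally constant in $y$ on a positive-$\mu_0$-measure subset $E \subset U$, a density-point selection in the spirit of the end of the proof of Proposition~\ref{prop:exmap} would produce two disjoint positive-measure pieces of $E$ on which such a swap yields a measurable map $T' \ne T$ with $(\id,T')_\sharp\mu_0 \in \Op_3(\mu_0,\mu_1)$, contradicting the singleton clause of Proposition~\ref{prop:exmap}. The symmetric argument on $V$ gives the statement for $T_2$.

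The delicate step is this measurable realization of the swap on a two-dimensional set of positive Lebesgue measure: since a single vertical slice on which $T_1$ varies is $\mu_0$-null, one must match positive-measure pieces of $E$ whose $T_1$-profiles can be consistently interchanged while preserving both the source and target marginals. The density-point construction from the end of the proof of Proposition~\ref{prop:exmap} is tailor-made for this, and the uniform strict $H$-interior estimate on $U$ is exactly what keeps all three costs untouched by the exchange.
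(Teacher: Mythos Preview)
Your argument for \eqref{eq:nonnegderivative} is correct and in fact a little cleaner than the paper's: applying \eqref{eq:horcyc} and \eqref{eq:vercyc} on axis-aligned slices with equal second coordinate gives the 1D monotonicity of $T_1(\cdot,y)$ and $T_2(x,\cdot)$ uniformly, without splitting into the cases $H$ and $V$ as the paper does (where it invokes \eqref{eq:comcyc} for one derivative and Lemma~\ref{lma:monotone} for the other).

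Your argument for \eqref{eq:mixedzero}, however, has a genuine gap. The ``exchange of horizontal target coordinates'' you describe cannot be made into a competitor in $\Op_3(\mu_0,\mu_1)$. If you literally swap only the first coordinates of the targets --- sending $(x,y_1)\mapsto(T_1(x,y_2),T_2(x,y_1))$ and $(x,y_2)\mapsto(T_1(x,y_1),T_2(x,y_2))$ --- then the second marginal is no longer $\mu_1$, so the plan is not admissible at all. If instead you swap the full targets, the $l^\infty$- and $x^2$-costs are indeed preserved on a strictly horizontal neighbourhood, but the $y^2$-cost changes by $2(y_1-y_2)(T_2(x,y_1)-T_2(x,y_2))$, which is nonnegative by \eqref{eq:vercyc} and strictly positive unless $T_2(x,y_1)=T_2(x,y_2)$. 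So in the generic case the swapped plan lies in $\Op_2\setminus\Op_3$ and no contradiction with uniqueness follows. The density-point machinery from Proposition~\ref{prop:exmap} does not rescue this, because that argument produced a \emph{strict} cost improvement, whereas here you need cost \emph{equality} in all three functionals simultaneously.

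The paper's route is different and avoids this obstacle: locally in $H$ the $l^\infty^2$-cost coincides with $|x-x'|^2$, so already the \emph{first} minimization forces the induced coupling of the $x$-marginals to be the unique 1D monotone rearrangement (the $x$-marginal of $\mu_0$ is non-atomic since $\mu_0\ll\LL_2$). Hence $T_1$ is $\mu_0$-a.e.\ a function of $x$ alone on $H$ --- this is what the paper means by ``vertical lines are locally sent to vertical lines by \eqref{eq:comcyc}''. The statement for $T_2$ on $V$ is obtained symmetrically.
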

\begin{proof}
 In proving \eqref{eq:nonnegderivative} assume first that $((x,y),T(x,y)) \in H$. Then by the cyclical monotonicity \eqref{eq:comcyc}
 we have $\frac{\partial T_1}{\partial x} \ge 0$. Notice that in $H$ vertical lines are locally sent to vertical 
 lines so that $\frac{\partial T_2}{\partial y} \ge 0$ follows from \eqref{eq:correctorder1}.
 In a similar way we can prove \eqref{eq:nonnegderivative} assuming $((x,y),T(x,y)) \in V$.

 The first claim in \eqref{eq:mixedzero} follows again from the observation that in $H$ vertical lines are locally sent to vertical 
 lines, and the second claim follows analogously.
\end{proof}

\subsection{Defining the midpoint}

As we already saw in the Introduction (Figure \ref{fig:idea}) we have to deviate the midpoint of a geodesic
from the Euclidean midpoint by an amount depending on the endpoints of the geodesics.
A geodesic going in the 45 degree direction has to remain
the same geodesic and a geodesic going in the horizontal direction can deviate the most.

The idea behind defining the midpoint the way we do here is that we want to keep the height of the transport
right for a (vertical) $\CD(0,2)$ condition. If the height is exactly the correct one for the condition 
between vertical strips with their base on the sphere bounding our domain, it will also be infinitesimally correct.

Naturally the correction for the midpoints needs to be done only in the horizontal part $H$ of the transport. For the vertical part $V$ and 
the diagonal part $D$ we can use the Euclidean midpoints (who will respectively give a $\CD(0,2)$ and $\CD(0,1)$ condition 
for those parts of the transport).

The midpoint $\mu_\frac12$ will be defined using the mapping $M \colon E \times E \to \R^2$ given by
\begin{equation}\label{eq:Mdef1}
  M\left((x_0,y_0),(x_1,y_1)\right) =  \left(\frac{x_0+x_1}2,\frac{y_0+y_1}2\right),
\end{equation}
if $((x_0,y_0),(x_1,y_1)) \notin H$ (corresponding to the Euclidean midpoint in the non horizontal transport), and by
\begin{equation}\label{eq:Mdef2}
 \begin{split}
  M\left((x_0,y_0),(x_1,y_1)\right) =  \bigg(\frac{x_0+x_1}2, & \frac{S(x_0)+  S(x_1)}2 + (x_0-x_1)^2\\
                                      & + \frac14 \left(\sqrt{y_0 - S(x_0)}+\sqrt{y_1 - S(x_1)}\right)^2 \bigg),
 \end{split}
\end{equation} 
if $((x_0,y_0),(x_1,y_1)) \in H$ (corresponding to the vertical shrinking to satisfy the $CD(0,2)$ condition in the horizontal transport).

The first thing to check is that $M$ really gives midpoints. As usual, we write $M = (M_1,M_2)$.

\begin{lemma}\label{lma:midpoint}
  $M\left((x_0,y_0),(x_1,y_0)\right) \in \midp\left((x_0,y_0),(x_1,y_0)\right)$.
\end{lemma}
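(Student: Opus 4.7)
The plan is to verify the defining equalities $\|M - (x_i,y_0)\|_\infty = \tfrac12|x_0-x_1|$ for $i=0,1$ by direct computation. Write $T := |x_0 - x_1|$. If $x_0 = x_1$ the two endpoints coincide, both branches of the definition of $M$ return $(x_0,y_0)$, and the claim is trivial, so I may assume $x_0 \ne x_1$. Then $|y_0 - y_0| = 0 < T$, so $((x_0,y_0),(x_1,y_0)) \in H$ and the second branch \eqref{eq:Mdef2} applies. The equality $|M_1 - x_i| = T/2$ is immediate from $M_1 = (x_0+x_1)/2$, so what remains is the vertical bound $|M_2 - y_0| \le T/2$.

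Setting $s_i := y_0 - S(x_i) \ge 0$ (nonnegative because $(x_i,y_0) \in E$) and substituting $y_0 = S(x_i) + s_i$ into the formula for $M_2$, a short algebraic manipulation using $(\sqrt{s_0}+\sqrt{s_1})^2 = (s_0+s_1) + 2\sqrt{s_0 s_1}$ collapses the expression to the clean identity
\[
M_2 - y_0 \;=\; (x_0-x_1)^2 \;-\; \tfrac14\bigl(\sqrt{s_0} - \sqrt{s_1}\bigr)^2.
\]
I would then bound each of the two terms by $T/2$ separately.

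The positive term is $(x_0-x_1)^2 = T^2 \le T/64 < T/2$ by the standing assumption $b-a < 1/64$ on the width of $E$. For the negative term I would invoke the elementary inequality $(\sqrt{a}-\sqrt{b})^2 \le |a-b|$ valid for $a,b \ge 0$ (which follows from $(\sqrt{a}+\sqrt{b})^2 \ge a+b \ge |a-b|$), reducing matters to bounding $|s_0 - s_1| = |S(x_0) - S(x_1)|$. The centre condition \eqref{eq:spherecenter} forces $|x - x_c| < 1/\sqrt{5}$ throughout $E$, so $|S'(x)| = |x-x_c|/\sqrt{1-(x-x_c)^2} < 1/2$, giving $|S(x_0)-S(x_1)| \le T/2$ and hence $\tfrac14(\sqrt{s_0}-\sqrt{s_1})^2 \le T/8$. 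Combining the two estimates yields $-T/2 < M_2 - y_0 \le T/2$, as required. The only non-routine step is recognizing the perfect-square collapse of the vertical expression; after that the bounds follow immediately from the geometry of $E$.
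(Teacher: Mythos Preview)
Your argument is correct and clean for the statement exactly as written, but note that the lemma statement carries an evident typo: the second point should be $(x_1,y_1)$, not $(x_1,y_0)$. This is clear from the paper's own proof (which immediately assumes $y_0\le y_1$ and distinguishes cases in $H$), from the definition of $M$ on general pairs, and from how the lemma is used afterwards (with $T(x,y)$ in place of the second point). So what you have established is only the special case $y_1=y_0$ of the intended result.

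Your perfect-square collapse
\[
M_2 - y_0 \;=\; (x_0-x_1)^2 - \tfrac14\bigl(\sqrt{s_0}-\sqrt{s_1}\bigr)^2
\]
relies on both $s_i$ being measured from the \emph{same} height $y_0$; with a general $y_1$ the two $y_0$'s in the computation become $y_0$ and $y_1$ and the identity no longer simplifies. The paper handles the general case by a different device: it observes that $M_2$ is monotone in each of $y_0,y_1$, and therefore reduces the two required one-sided bounds $M_2 - y_0 \le \tfrac T2$ and $y_1 - M_2 \le \tfrac T2$ to their respective extremal configurations $y_1 - y_0 = x_1 - x_0$ and $y_1 = y_0$. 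Your argument is essentially the paper's treatment of the second extremal case (indeed a bit sharper: you get $\tfrac14(\sqrt{s_0}-\sqrt{s_1})^2 \le T/8$, whereas the paper only needs $\le T/2$). What is missing is the first extremal case $|y_1 - y_0| = |x_1 - x_0|$, together with the monotonicity observation that reduces the general inequality to these two endpoints.
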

\begin{proof}
 We may assume $x_0 \le x_1$.
 If $((x_0,y_0),(x_1,y_1)) \notin H$, the claim is obvious. Let then $((x_0,y_0),(x_1,y_1)) \in H$ so that $M$ is given by \eqref{eq:Mdef2}.
 By symmetry, we may assume that $y_0 \le y_1$. We have to show that
 \begin{equation}\label{eq:midpoint1}
  M_2\left((x_0,y_0),(x_1,y_1)\right) - y_0 \le \frac{x_1-x_0}2
 \end{equation}
 and
 \begin{equation}\label{eq:midpoint2}
  y_1 - M_2\left((x_0,y_0),(x_1,y_1)\right) \le  \frac{x_1-x_0}2.
 \end{equation}
 Because $M_2$ is increasing in $y_1$, for verifying \eqref{eq:midpoint1} it is enough to check the extreme case
 $y_1 - y_0 = x_1 - x_0$ (even though in this case the mapping $M$ is defined using \eqref{eq:Mdef1}).
 Notice that by assumption on the width and height of $E$ we have
 \begin{equation}\label{eq:164}
  |y_1-y_0|,|y_1-S(x_1)| \le \frac1{64}
 \end{equation}
 and by \eqref{eq:spherecenter} we have
 \begin{equation}\label{eq:smallangle}
  |S(x_1) - S(x_0)| \le \frac12|x_1 - x_0| = \frac12 |y_1 - y_0|.
 \end{equation}
 Together \eqref{eq:164} and \eqref{eq:smallangle} yield
 \[
  (|y_1-y_0| - |S(x_1) - S(x_0)|)^2 \ge \frac14 |y_1-y_0|^2 \ge 8(|y_1 - S(x_1)| + |y_1-y_0|)|y_1-y_0|^2.
 \]
 This immediately gives
 \[
  4(y_1-y_0)^2 \le \left(\sqrt{y_0-S(x_0)} - \sqrt{y_1 - S(x_1)}\right)^2,
 \]
 which is \eqref{eq:midpoint1} in the extreme case $y_1 - y_0 = x_1 - x_0$.
 
 In checking \eqref{eq:midpoint2} we can use the fact that $M_2$ is increasing in $y_0$. Hence we only need to check
 the extreme case $y_0 = y_1$. Again by symmetry we may assume $S(x_0) \le S(x_1)$. Because of
 \eqref{eq:smallangle} we have
 \[
  2y_0 - S(x_0) - S(x_1) - 2|x_1-x_0| \le 2(y_0 -S(x_1)) \le 2\sqrt{(y_0-S(x_1))(y_0 - S(x_0))}.
 \]
 Therefore
 \[
  \frac14\left(\sqrt{y_0 - S(x_0)} - \sqrt{y_0 - S(x_1)}\right)^2 \le \frac{x_1 - x_0}{2} \le \frac{x_1 - x_0}{2} + (x_1 - x_0)^2,
 \]
 which is the inequality \eqref{eq:midpoint2} in the critical case.
\end{proof}

The second thing to check is that the midpoints are inside our domain $E$.

\begin{lemma}
 The mapping $M$ has values in $E$.
\end{lemma}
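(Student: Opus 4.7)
The content is the vertical containment $S(M_1) \le M_2 \le d + (b-a)/2$, since $M_1 = (x_0+x_1)/2 \in [a,b]$ is automatic. I would split into the two cases \eqref{eq:Mdef1}--\eqref{eq:Mdef2} and treat the lower and upper bounds separately.

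For the lower bound in the $H$ case, dropping the nonnegative last term in \eqref{eq:Mdef2} gives $M_2 \ge \tfrac{S(x_0)+S(x_1)}{2} + (x_0-x_1)^2$, which combined with \eqref{eq:smallverticaldifference} yields $M_2 - S(M_1) \ge \tfrac{(x_0-x_1)^2}{2} \ge 0$. The non-$H$ lower bound is the delicate step: here $M_2 = (y_0+y_1)/2$ is the Euclidean midpoint, and two points sitting on the arc have this midpoint strictly below it, so concavity of $S$ alone cannot help. The plan is to leverage the non-horizontal hypothesis $|y_0-y_1| \ge |x_0-x_1|$ together with the Lipschitz bound $|S(x_0)-S(x_1)| \le \tfrac{1}{2}|x_0-x_1|$, which follows from \eqref{eq:spherecenter} via $|S'(x)| = |x-x_c|/\sqrt{1-(x-x_c)^2} < 1/2$. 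Assuming WLOG $y_0 \le y_1$,
\[
 y_1 - S(x_1) \ge (y_1-y_0) - |S(x_1)-S(x_0)| \ge \tfrac{1}{2}|x_0-x_1|,
\]
and summing with $y_0 - S(x_0) \ge 0$ and using $|x_0-x_1| \le \tfrac{1}{64}$ gives $(y_0-S(x_0)) + (y_1-S(x_1)) \ge \tfrac{1}{2}|x_0-x_1| \ge (x_0-x_1)^2$; feeding this into \eqref{eq:smallverticaldifference} yields $(y_0+y_1)/2 \ge S(M_1)$.

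For the upper bound, the non-$H$ case is immediate since $M_2 = (y_0+y_1)/2 \le d + (b-a)/2$. In the $H$ case, the AM-GM bound $(\sqrt{t_0}+\sqrt{t_1})^2 \le 2(t_0+t_1)$ with $t_i = y_i - S(x_i)$ reduces \eqref{eq:Mdef2} to $M_2 \le (y_0+y_1)/2 + (x_0-x_1)^2$, after which the smallness conventions on $b-a$ and on the height budget of $E$ close the estimate. The principal obstacle is the non-$H$ lower bound: concavity of $S$ is insufficient when both endpoints sit on the circular boundary, and one must convert the non-horizontal character of the transport into quantitative vertical slack above the arc.
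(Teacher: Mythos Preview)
Your argument is correct on the substantive points and is in fact more explicit than the paper's own proof. The paper disposes of the lemma in three lines: it declares the non-$H$ case ``obvious'', and in the $H$ case it invokes Lemma~\ref{lma:midpoint} to say that since $M$ is already known to be an $l^\infty$-midpoint of the two endpoints, the only boundary of $E$ that could be violated is the circular arc, whence the single inequality $M_2\bigl((x_0,y_0),(x_1,y_1)\bigr) > S\bigl(\tfrac{x_0+x_1}{2}\bigr)$ suffices; this follows from \eqref{eq:smallverticaldifference} exactly as you do, by dropping the nonnegative square in \eqref{eq:Mdef2}.

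The genuine difference is your treatment of the non-$H$ lower bound. You correctly observe that concavity of $S$ works against you (the Euclidean midpoint of two points on the arc lies strictly below it), and you supply the honest justification: from \eqref{eq:spherecenter} one gets $|S'|<\tfrac12$, hence $|S(x_0)-S(x_1)|\le\tfrac12|x_0-x_1|$, and combined with $|y_0-y_1|\ge|x_0-x_1|$ this yields $(y_0-S(x_0))+(y_1-S(x_1))\ge\tfrac12|x_0-x_1|\ge(x_0-x_1)^2$, which with \eqref{eq:smallverticaldifference} gives $\tfrac{y_0+y_1}{2}\ge S(M_1)$. This is the computation the paper hides behind the word ``obvious''; geometrically it is the fact that a segment of slope $\ge 1$ with both endpoints above an arc of slope $<\tfrac12$ cannot dip below the arc, but your explicit chain of inequalities is the proof.

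For the $H$-case upper bound the paper's route via Lemma~\ref{lma:midpoint} is more economical than your AM--GM estimate $M_2\le\tfrac{y_0+y_1}{2}+(x_0-x_1)^2$: once $M$ is known to be a midpoint one has $|M_2-y_i|\le\tfrac12|x_0-x_1|$, and the shape of $E$ in Figure~\ref{fig:2} (together with the extra $\tfrac{b-a}{2}$ built into the height) absorbs this directly. Your ``smallness conventions'' appeal is a little vague here; citing Lemma~\ref{lma:midpoint} as the paper does would make this step cleaner.
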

\begin{proof}
 Again, if $((x_0,y_0),(x_1,y_1)) \notin H$, the claim is obvious. Hence, suppose $((x_0,y_0),(x_1,y_1)) \in H$.
 By Lemma \ref{lma:midpoint} we know that $M\left((x_0,y_0),(x_1,y_0)\right) \in \midp\left((x_0,y_0),(x_1,y_0)\right)$.
 Therefore the only thing to check is that
 \[
  M\left((x_0,y_0),(x_1,y_0)\right) > S\left(\frac{x_0 + x_1}{2}\right).
 \]
 This follows from our assumptions on the domain $E$, more precisely from \eqref{eq:smallverticaldifference}.
\end{proof}

\subsection{Verifying the local $\CD(0,4)$ condition}

In order to be able to use the Jacobian equation \eqref{eq:jac} for the midpoints we first have to 
check that our mapping giving the midpoint is essentially one-to-one.

\begin{lemma}\label{lma:onetoone}
 Let $\mu_0,\mu_1 \in \prob{E}$ with $\mu_0, \mu_1 \ll \LL_2$. Let $T$ be the optimal map from Proposition \ref{prop:exmap}.
 Then the map $M \circ (\id, T)$ is one-to-one outside a set of $\mu_0$-measure zero.
\end{lemma}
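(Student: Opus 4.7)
My plan is to decompose the source $E$ (up to a $\mu_0$-null set) into three regions $A_H$, $A_V$, $A_D$ according to whether $((x,y),T(x,y))$ lies in $H$, $V$, or $D$, and further split $A_D$ into its four pieces $A_D^{\varepsilon\eta}$ with $\varepsilon,\eta\in\{+,-\}$ encoding the signs of $T_1(x,y)-x$ and $T_2(x,y)-y$. First I would prove that $M\circ(\id,T)$ is injective when restricted to each piece, and then rule out cross-collisions between distinct pieces on sets of positive $\mu_0$-measure.

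For injectivity on $A_H$: by Lemma \ref{lma:partials}, $T_1$ depends only on $x$ and is nondecreasing, while $T_2(x,\cdot)$ is nondecreasing in $y$. Thus $M_1 = \frac12(x + T_1(x))$ is strictly increasing in $x$, so $M_1$ recovers $x$ uniquely. With $x$ fixed, formula \eqref{eq:Mdef2} expresses $M_2$ as a constant plus $\frac14\bigl(\sqrt{y-S(x)}+\sqrt{T_2(x,y)-S(T_1(x))}\bigr)^2$, which is strictly increasing in $y$ because the first radicand is strictly increasing and the second is nondecreasing. The case of $A_V$ is symmetric, using the Euclidean formula \eqref{eq:Mdef1} together with the analogue of Lemma \ref{lma:partials}. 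On each diagonal piece $A_D^{\varepsilon\eta}$, introduce the common shift $a(x,y):=\varepsilon(T_1(x,y)-x)=\eta(T_2(x,y)-y)\ge 0$. If two distinct points shared the same Euclidean midpoint, subtracting the coordinate equations would force them to lie on a common slope-$\varepsilon\eta$ line, and a direct expansion of \eqref{eq:comcyc} applied to the pair of transport moves would yield $(a_0-a_1)^2\le 0$, contradicting distinctness.

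The hard part is the cross-collision analysis: for each pair of distinct pieces, the set of $(x_0,y_0)$ in one piece whose image under $M\circ(\id,T)$ coincides with the image of some point in another piece must have $\mu_0$-measure zero. The critical case is $A_H$ versus $A_V$: equating first coordinates gives $x_0 + T_1(x_0) = x_1 + T_1(x_1,y_1)$, while equating second coordinates couples the modified formula \eqref{eq:Mdef2} to the Euclidean formula \eqref{eq:Mdef1}. My approach is to combine the strict $x$-monotonicity of $x\mapsto x+T_1(x)$ in $A_H$ with the cyclical monotonicity relations \eqref{eq:vercyc} and \eqref{eq:horcyc} to reduce the coupled system to a lower-dimensional subvariety of $E\times E$; the projection of this subvariety to its first factor then sweeps out only a $\mu_0$-null set, since $\mu_0\ll\LL_2$. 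Cross-collisions involving $A_D$ are treated analogously: the diagonal constraint $|T_1-x|=|T_2-y|$ further restricts possible colliding partners, and the smallness of $E$ (width and height below $\frac1{64}$) controls the curvature contribution coming from $S$. Combining the piecewise injectivity with the null-measure of all cross-collision sets gives that $M\circ(\id,T)$ is one-to-one outside a $\mu_0$-null set.
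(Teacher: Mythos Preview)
Your decomposition strategy is reasonable, but two steps are not justified. First, invoking Lemma~\ref{lma:partials} to say that on $A_H$ the first component $T_1$ ``depends only on $x$ and is nondecreasing'' overstates what that lemma gives: it provides only $\mu_0$-a.e.\ approximate-derivative information (nonnegativity of $\partial T_1/\partial x$ and \emph{local} constancy in $y$), which does not let you compare $T_1$ at two \emph{separated} points, nor does an a.e.\ nonnegative approximate derivative force global monotonicity of a merely measurable function. For pointwise comparisons you must work on the cyclically monotone set $\Gamma$ of Lemma~\ref{lma:cycmon} and use Lemma~\ref{lma:monotone}. Second, and more seriously, the cross-collision argument is a genuine gap. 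You claim that the $H$--$V$ collision set lies in ``a lower-dimensional subvariety of $E\times E$'' whose projection is $\mu_0$-null, but $T$ is only measurable, so no variety structure is available; and even a genuinely lower-dimensional subset of $E\times E$ can project onto a full-measure set in $E$. The secondary monotonicities \eqref{eq:vercyc}, \eqref{eq:horcyc} do not by themselves force the constraint into a rectifiable set, so this dimension-counting heuristic cannot be made rigorous.

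The paper's proof avoids both issues by a different and shorter route: it fixes two distinct points with $((x_i,y_i),T(x_i,y_i))\in\Gamma$ sharing the same image under $M\circ(\id,T)$ and derives a \emph{pointwise} contradiction in each of only three cases (both in $H$; both in $H^c=V\cup D$; one in each), using \eqref{eq:comcyc}--\eqref{eq:horcyc} together with the strict monotonicity of $M_2$ in its $y$-arguments and \eqref{eq:correctorder1}. Thus there are no collisions on $\Gamma$ at all, and no null-set bookkeeping is needed beyond restricting to $\Gamma$ at the outset. Your finer split of $A_D$ into four sign sectors is also unnecessary once $V$ and $D$ are merged via the common Euclidean midpoint formula \eqref{eq:Mdef1}.
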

\begin{proof}
 Let $\Gamma$ be the set from Lemma \ref{lma:cycmon}.
 Suppose that there exist $(x_1,y_1), (x_2,y_2) \in E$ so that $((x_1,y_1),T(x_1,y_1)), ((x_2,y_2),T(x_2,y_2)) \in \Gamma$,
  $(x_1,y_1) \ne (x_2,y_2)$ and
 \begin{equation}\label{eq:equal}
   M \circ (\id, T)(x_1,y_1) = M \circ (\id, T)(x_2,y_2).  
 \end{equation}
 We have three cases to check:
 \begin{enumerate}
  \item[(i)] $((x_1,y_1),T(x_1,y_1)), ((x_2,y_2),T(x_2,y_2)) \in H$
  \item[(ii)] $((x_1,y_1),T(x_1,y_1)), ((x_2,y_2),T(x_2,y_2)) \notin H$
  \item[(iii)] $((x_1,y_1),T(x_1,y_1)) \in H$, $((x_2,y_2),T(x_2,y_2)) \notin H$.
 \end{enumerate}
 In the case (i) we may assume $x_1 = x_2$ and $T_1(x_1,y_1) = T_1(x_2,y_2)$ by cyclical monotonicity \eqref{eq:comcyc}
 and $y_1 < y_2$ by symmetry. Then by Lemma \ref{lma:monotone} we have $T_2(x_1,y_1) < T_2(x_2,y_2)$.
 Since $M_2$ is strictly increasing in both of the $y$-coordinates, we have 
 \[
  M_2 \circ (\id, T)(x_1,y_1) < M_2 \circ (\id, T)(x_2,y_2)  
 \]
 contradicting the assumption \eqref{eq:equal}.
 
 In the case (ii) we may first of all assume $y_1 = y_2$ and $T_2(x_1,y_1) = T_2(x_2,y_2)$ by cyclical monotonicity \eqref{eq:comcyc}.
 The assumption \eqref{eq:equal} gives
 \[
  \frac{x_1 + T_1(x_1,y_1)}{2} = \frac{x_2 + T_1(x_2,y_2)}{2}.
 \]
 This implies via Lemma \ref{lma:monotone} that $x_1 = x_2$, which contradicts the assumption $(x_1,y_1) \ne (x_2,y_2)$.

 Finally we have the case (iii). We may assume $x_1 < T_1(x_1,y_1)$. If $T_1(x_2,y_2) < T_1(x_1,y_1)$,
 then
 \[
  ||(x_1,y_1) - T(x_2,y_2)|| < ||(x_1,y_1) - T(x_1,y_1)|| = ||(x_2,y_2) - T(x_2,y_2)||
 \]
 contradicting the cyclical monotonicity \eqref{eq:comcyc}. On the other hand, if $T_1(x_2,y_2) = T_1(x_1,y_1)$,
 we have $y_2 < y_1$, $x_1 = x_2$ and $T_2(x_1,y_1) < T_2(x_2,y_2)$ contradicting \eqref{eq:correctorder1}.
\end{proof}

Now we are in a position to estimate the density of the midpoint measure.

\begin{proposition}\label{prop:densityalonggeodesics}
 Let $\mu_0,\mu_1 \in \prob{E}$ with $\mu_0, \mu_1 \ll \LL_2$. 
 Then for all $N \ge 4$ we have
 \[
  \ent_N(\mu_\frac12) \le \frac12\left(\ent_N(\mu_0) + \ent_N(\mu_1)\right),
 \]
 where $\mu_\frac12 = (M \circ (\id,T))_\sharp\mu_0$ with $T$ being the optimal map from Proposition \ref{prop:exmap}.
\end{proposition}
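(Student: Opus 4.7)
The plan is to reduce the entropy inequality to a pointwise Jacobian estimate and then to verify it separately on the portions of the transport belonging to $H$, $V$, and $D$.

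By Lemma \ref{lma:onetoone} the map $F := M\circ(\id,T)$ is one-to-one outside a $\mu_0$-null set, and by Lemma \ref{lma:partials} combined with Proposition \ref{prop:appjac} (and its analogue with the two coordinates swapped, used on $V$) both $T$ and $F$ satisfy the Jacobian equation at $\mu_0$-a.e.\ point, with Jacobians given by products of two approximate partial derivatives. Changing variables yields
\[
 \ent_N(\mu_\frac12) = -\!\int J_F^{1/N}\rho_0^{-1/N}\,\d\mu_0, \qquad \ent_N(\mu_1) = -\!\int J_T^{1/N}\rho_0^{-1/N}\,\d\mu_0,
\]
and $\ent_N(\mu_0) = -\int\rho_0^{-1/N}\,\d\mu_0$, so the stated bound reduces to the pointwise Brunn--Minkowski-type inequality
\[
 J_F^{1/N} \,\ge\, \tfrac12\bigl(1 + J_T^{1/N}\bigr)\qquad \mu_0\text{-a.e.,}
\]
to be verified for every $N\ge 4$. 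Splitting $\mu_0$ into its restrictions to the preimages of $H$, $V$, $D$ under $(\id,T)$, the injectivity of $T$ and $F$ forces the corresponding images to be mutually disjoint up to null sets, so all three entropies decompose as exact sums indexed by $H$, $V$, $D$; it therefore suffices to check the pointwise inequality on each of the three sets separately.

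On $V$ the midpoint is the Euclidean one and Lemma \ref{lma:partials} gives the clean factorisation $J_T = ab$ and $J_F = (1+a)(1+b)/4$ with $a = \partial_x T_1, b = \partial_y T_2 \ge 0$; the bound with $N = 2$ (hence, by monotonicity of the Brunn--Minkowski inequality in $N$, also for every $N \ge 4$) then reads $\sqrt{(1+a)(1+b)} \ge 1 + \sqrt{ab}$, i.e.\ AM--GM. On $D$ the transport moves along diagonal lines of fixed slope $\pm 1$, so writing $T(x,y) = (x+t,\, y\pm t)$ the Jacobians simplify to $J_T = 1 + \partial_x t \pm \partial_y t$ and $J_F = (1 + J_T)/2$, giving the pointwise bound with $N = 1$, which in turn implies the case $N \ge 4$.

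The main obstacle is the case $H$. Here $F_2$ is the more elaborate expression \eqref{eq:Mdef2}; setting $a = T_1'(x)$, $b = \partial_y T_2(x,y)$, $u = \sqrt{y - S(x)}$, and $v = \sqrt{T_2(x,y) - S(T_1(x))}$, a direct differentiation produces $J_T = ab$ and
\[
 J_F = \frac{(1+a)(u+v)(v+ub)}{8uv},
\]
so that the required inequality $J_F^{1/4} \ge \tfrac12\bigl(1 + (ab)^{1/4}\bigr)$ must be verified by hand. The quadratic term $(x_0 - x_1)^2$ in \eqref{eq:Mdef2} is tailored via \eqref{eq:smallverticaldifference} to absorb the non-linearity of the boundary function $S$, while the $(u+v)^2/4$ term implements the vertical shrinking needed to pass around the corner. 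Combined with the diameter bound $1/64$ and the slope bound \eqref{eq:spherecenter}, these force $J_F$ to split, up to controlled error terms in $u$, $v$ and the curvature of $S$, into a horizontal factor $(1+a)/2$ and a vertical factor close to $(1+b)/2$, so that two nested applications of AM--GM, each consuming a factor of $1/2$ in the exponent, finally deliver the $\CD(0,4)$ bound. Tracking these error terms is the technical heart of the argument.
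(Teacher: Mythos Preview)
Your reduction to the pointwise Jacobian inequality, and the treatment of the $V$ and $D$ regimes, are essentially the paper's argument (in $D$ the paper rotates by $\pi/4$ rather than parametrising by $t$, which has the advantage of placing you squarely in the hypotheses of Proposition~\ref{prop:appjac}; your direct Jacobian computation presupposes approximate differentiability of $t$, which you have not justified, but this is minor).

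The real gap is in $H$. You correctly obtain
\[
 J_F \;=\; \frac{(1+a)(u+v)(v+ub)}{8uv},
\]
and then stop, replacing the verification of $J_F^{1/4}\ge\tfrac12\bigl(1+(ab)^{1/4}\bigr)$ by a narrative about error terms, the curvature of $S$, the diameter bound $1/64$, and the slope bound \eqref{eq:spherecenter}. None of those quantities enter here: the additive terms $\tfrac12(S(x_0)+S(x_1))$ and $(x_0-x_1)^2$ in \eqref{eq:Mdef2} are constant in $y$, so they vanish when you take $\partial_y F_2$, and the constants $1/64$ and \eqref{eq:spherecenter} are used only in Lemmas~\ref{lma:midpoint}--3.2 to ensure that $M$ lands in $\midp$ and in $E$. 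The Jacobian estimate in $H$ is a purely algebraic fact in the four nonnegative variables $a,b,u,v$, and it goes through cleanly with two applications of AM--GM:
\[
 (u+v)(v+ub) \;=\; uv(1+b)+u^2b+v^2 \;\ge\; uv(1+b)+2uv\sqrt{b} \;=\; uv\,(1+\sqrt b)^2,
\]
while $1+a\ge\tfrac12(1+\sqrt a)^2$, so that $J_F\ge\tfrac1{16}(1+\sqrt a)^2(1+\sqrt b)^2$. Taking fourth roots and applying AM--GM once more in the form $\sqrt a+\sqrt b\ge 2(ab)^{1/4}$ yields
\[
 J_F^{1/4} \;\ge\; \tfrac12\sqrt{(1+\sqrt a)(1+\sqrt b)} \;\ge\; \tfrac12\bigl(1+(ab)^{1/4}\bigr),
\]
which is exactly the required bound. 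This is the ``technical heart'' you allude to, and it is two lines; your description of it as an error-tracking argument involving the geometry of $E$ mischaracterises what is actually needed and leaves the key step unproved.
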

\begin{proof}
 We will show that for $\mu_0$-almost every $(x,y) \in E$ we have
 \begin{equation}\label{eq:goalalonggeodesics}
   \rho_\frac12(M((x,y),T(x,y))^{-\frac14} \ge \frac12\left(\rho_0((x,y))^{-\frac14} + \rho_1(T(x,y))^{-\frac14}\right),
 \end{equation}
 where $\mu_0 = \rho_0\LL_2$, $\mu_1 = \rho_1\LL_2$ and $\mu_\frac12 = (M\circ(\id,T))_\sharp \mu_0 = \rho_\frac12\LL_2$.
 The claim of the Proposition then follows by H\"older's inequality and integration.

 By Lemma \ref{lma:onetoone} the mapping $M\circ(\id,T)$ is essentially one-to-one. 
 Our claim \eqref{eq:goalalonggeodesics} will therefore follow if we are able to show that 
 \begin{equation}\label{eq:goal2}
  J_{M \circ (\id,T)}(x,y)^{\frac14} \ge \frac12\left(1 + J_T(x,y)^{\frac14}\right)
 \end{equation}
 holds $\mu_0$-almost everywhere.

 By Lemma \ref{lma:partials} we have $\mu_0$-almost everywhere in $H \cup V$ that
 $T_1$ is locally constant in $y$, $\frac{\partial T_1}{\partial x} \ge 0$ 
 and $\frac{\partial T_2}{\partial y} \ge 0$.
 Thus $\mu_0$-almost everywhere in $H \cup V$ we can write, using Proposition \ref{prop:appjac},
 \begin{equation}
  J_T(x,y) = \frac{\partial T_1}{\partial x}\frac{\partial T_2}{\partial y}.
 \end{equation}
 For the density $\rho_\frac12$ we will need to estimate the Jacobian determinant of the mapping $M \circ (\id,T)$ which is given by
 \[
  (M \circ (\id,T))(x,y) = \left(\frac{x+T_1}2, \frac{y+T_2}2\right),
 \]
 if $((x,y),T(x,y)) \notin H$, and by
 \[
  (M \circ (\id,T))(x,y) = \left(\frac{x+T_1}2, \frac{S(x) + S(T_1)}2 + \frac14 \left(\sqrt{y -S(x)}+\sqrt{T_2 - S(T_1)}\right)^2 + (x-T_1)^2\right),
 \]
 if $((x,y),T(x,y)) \in H$.

 Again by Lemma \ref{lma:partials} we have $\mu_0$-almost everywhere
 \[
  J_{M \circ (\id,T)}(x,y) = \frac12\left(1 + \frac{\partial T_1}{\partial x}\right) \cdot\frac12\left(1 + \frac{\partial T_2}{\partial y}\right),
 \]
 if $((x,y),T(x,y)) \in V$, and 
 \[
   J_{M \circ (\id,T)}(x,y)
   = 
   \frac12\left(1 + \frac{\partial T_1}{\partial x}\right) \cdot \frac14 \left(1 + \frac{\partial T_2}{\partial y}
    + \sqrt{\frac{T_2 - S(T_1)}{y - S(x)}} + \sqrt{\frac{y - S(x)}{T_2 - S(T_1)}}\frac{\partial T_2}{\partial y} \right),
 \]
 if $((x,y),T(x,y)) \in H$.
 
 Let us check \eqref{eq:goal2} in the case $((x,y),T(x,y)) \in H$. The case $((x,y),T(x,y)) \in V$ follows easily
 and the case $((x,y),T(x,y)) \in D$ will be considered at the end of the proof. First observe that
 \[
  1 + \frac{\partial T_1}{\partial x} \ge \frac12\left(1 + \sqrt{\frac{\partial T_1}{\partial x}}\right)^2
 \]
 and
 \[
  \sqrt{\frac{T_2 - S(T_1)}{y - S(x)}} + \sqrt{\frac{y - S(x)}{T_2 - S(T_1)}}\frac{\partial T_2}{\partial y}
   \ge 2\sqrt{\frac{\partial T_2}{\partial y}}.
 \]
 Therefore
 \[
  J_{M \circ (\id,T)}(x,y) \ge \frac1{16}\left(1 + \sqrt{\frac{\partial T_1}{\partial x}}\right)^2\left(1 + \sqrt{\frac{\partial T_2}{\partial y}}\right)^2.
 \]
 Now, in order to obtain \eqref{eq:goal2} it is then sufficient to have
 \[
  \left(1 + \sqrt{\frac{\partial T_1}{\partial x}}\right)\left(1 + \sqrt{\frac{\partial T_2}{\partial y}}\right) \ge \left(1 + \left(\frac{\partial T_1}{\partial x}\frac{\partial T_2}{\partial y}\right)^{\frac14}\right)^2,
 \]
 which immediately follows from
 \[
  \left(\left(\frac{\partial T_1}{\partial x}\right)^\frac14 - \left(\frac{\partial T_2}{\partial y}\right)^\frac14\right)^2 \ge 0.
 \]

 Let us then consider the case $((x,y),T(x,y)) \in D$. By changing to coordinates $\tilde x = \frac1{\sqrt{2}}(x+y)$, $\tilde y = \frac1{\sqrt{2}}(x-y)$
 we may assume that either $\tilde T_1(\tilde x,\tilde y)-\tilde x$ or $\tilde T_2(\tilde x,\tilde y)-\tilde y$ (in the new coordinates) is constant. Assuming the first, we have
 \[
  \frac{\partial \tilde T_1}{\partial \tilde x} = 1 \qquad \text{and}\qquad \frac{\partial \tilde T_1}{\partial \tilde y} = 0
 \]
 giving
 \[
  J_{\tilde T}(\tilde x,\tilde y) = \frac{\partial \tilde T_2}{\partial \tilde y},
 \]
 which is non negative $\mu_0$-almost everywhere in $D$ by cyclical monotonicity \eqref{eq:comcyc}, and
 \[
  J_{\tilde M \circ (\id,\tilde T)}(\tilde x,\tilde y) = \frac12\left(1 + \frac{\partial \tilde T_1}{\partial \tilde x}\right) \cdot\frac12\left(1 + \frac{\partial \tilde T_2}{\partial \tilde y}\right) = \frac12\left(1 + J_{\tilde T}(\tilde x,\tilde y)\right)
 \]
 leading to  \eqref{eq:goal2}.
\end{proof}

Proposition \ref{prop:densityalonggeodesics} then gives the $\CD(0,4)$ condition in $E$. For the convenience of the reader
we now justify here the initial reductions.

\begin{theorem}\label{thm:CD04}
 The space $(E,||\cdot||_\infty,\LL_2\restr{E})$ satisfies $\CD(0,4)$.
\end{theorem}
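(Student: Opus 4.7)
The plan is to combine the midpoint entropy bound of Proposition \ref{prop:densityalonggeodesics} with the two reductions foreshadowed at the beginning of Section \ref{sec:verify}: first reduce to absolutely continuous endpoints, then iterate the midpoint construction to promote the inequality at $t = 1/2$ to convexity along the whole geodesic.

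First I would reduce to $\mu_0, \mu_1 \ll \LL_2\restr{E}$. Bounded support is automatic since $E$ is compact, so given arbitrary $\mu_0, \mu_1 \in \prob E$ my task is to find sequences $\mu_i^k \ll \LL_2\restr{E}$ with $W_2(\mu_i^k, \mu_i) \to 0$ and $\limsup_k \ent_{N'}(\mu_i^k) \le \ent_{N'}(\mu_i)$ for $i = 0, 1$. A standard mollification pushed slightly inside $E$ achieves this; the entropy bound is automatic because $t \mapsto t^{1 - 1/N'}$ is concave for $N' \ge 4 > 1$, so Jensen's inequality applied to the averaging kernel shows that $\ent_{N'}$ can only decrease under mollification.

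For each $k$ I would then iterate Proposition \ref{prop:densityalonggeodesics} on dyadic times. Given a.c. $\nu_0, \nu_1$, the proposition yields an a.c. midpoint $\nu_{1/2} = (M \circ (\id, T))_\sharp \nu_0$, absolutely continuous because Lemma \ref{lma:onetoone} makes $M \circ (\id, T)$ essentially one-to-one and the proof of Proposition \ref{prop:densityalonggeodesics} furnishes an explicit Jacobian density. Iterating recursively produces a family $\{\mu_t^k\}_{t \in D}$ indexed by the dyadic rationals $D \subset [0, 1]$, each $\mu_t^k$ absolutely continuous, with $W_2(\mu_s^k, \mu_t^k) = |s - t|\, W_2(\mu_0^k, \mu_1^k)$ and, by induction on the refinement level,
\[
  \ent_{N'}(\mu_t^k) \le (1 - t)\ent_{N'}(\mu_0^k) + t\, \ent_{N'}(\mu_1^k) \qquad \text{for every } t \in D \text{ and } N' \ge 4.
\]
Completeness of $(\probt E, W_2)$ extends this uniformly continuous dyadic family to a constant-speed geodesic $t \mapsto \mu_t^k$ on $[0,1]$, which lifts to some $\ppi^k \in \gopt(\mu_0^k, \mu_1^k)$; lower semi-continuity of $\ent_{N'}$ along $W_2$-convergence then promotes the dyadic inequality to all $t \in [0, 1]$.

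Finally, narrow compactness of $\prob{\geo(E)}$ (since $\geo(E)$ is compact) yields a subsequential limit $\ppi \in \gopt(\mu_0, \mu_1)$ of $\ppi^k$, and applying $W_2$-lower semi-continuity of $\ent_{N'}$ once more, together with the $\limsup$ control of endpoint entropies from the first reduction, gives the convexity inequality for $\mu_0, \mu_1$ and every $N' \ge 4$, which is the $\CD(0, 4)$ condition. The genuinely hard work is already absorbed into Proposition \ref{prop:densityalonggeodesics}; what remains is routine, the only place for care being that both the approximation step and the dyadic-to-continuous passage use lower semi-continuity of $\ent_{N'}$ in the correct direction, which is exactly why the exponent $1 - 1/N'$ with $N' \ge 4$ behaves well.
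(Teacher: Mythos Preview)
Your proposal is correct and follows essentially the same approach as the paper: approximate by absolutely continuous measures (the paper uses grid averaging on $E$ rather than mollification, but either works via Jensen), apply Proposition~\ref{prop:densityalonggeodesics} to obtain the midpoint inequality, iterate dyadically, and conclude by lower semi-continuity of $\ent_N$. The only organizational difference is that the paper first passes to the limit to obtain a good midpoint between the \emph{original} $\mu_0,\mu_1$ and then iterates, whereas you iterate at the approximate level and take a single limit at the end; both orderings are fine.
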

\begin{proof}
 We have to show that for any $\mu_0,\mu_1 \in \prob{E}$ there exists a geodesic $(\mu_t) \subset \prob E$
 along which we have the estimate
 \begin{equation}\label{eq:entrobound}
 \ent_N(\mu_t)\le (1-t)\ent_N(\mu_0) - t\ent_N(\mu_1) 
 \end{equation}
 for all $N \ge 4$ and $t \in (0,1)$.

 Let us first show that we can obtain this for $t = \frac12$. Take $\epsilon > 0$ and consider the approximated measures
 $\mu_{0,\epsilon} = \rho_{0,\epsilon}\LL_2, \mu_{1,\epsilon} = \rho_{1,\epsilon}\LL_2$
 that are obtained from the measures $\mu_0$ and $\mu_1$ by setting
 \[
  \rho_{i,\epsilon} = \frac{\mu_i\left(E \cap [n\epsilon,(n+1)\epsilon)\times[m\epsilon,(m+1)\epsilon)\right)}{\LL_2\left(E\cap[n\epsilon,(n+1)\epsilon)\times[m\epsilon,(m+1)\epsilon)\right)}
 \]
 on
 \[
  E \cap [n\epsilon,(n+1)\epsilon)\times[m\epsilon,(m+1)\epsilon)
 \]
 for every $n,m \in \Z$, $i=0,1$.

 Now $\ent_N(\mu_{i,\epsilon}) \le \ent_N(\mu_i)$ for all $N > 1$ by Jensen's inequality, $W_2(\mu_{i,\epsilon},\mu_i) \le \epsilon$
 and $\mu_{i,\epsilon} \ll \LL_2$. From $\mu_{0,\epsilon}$ to $\mu_{1,\epsilon}$ there exists an optimal map $T$ given by Proposition \ref{prop:exmap}
 and by Proposition \ref{prop:densityalonggeodesics} we get
 \[
  \ent_N(\mu_{\frac12,\epsilon}) \le \frac12\left(\ent_N(\mu_{0,\epsilon}) + \ent_N(\mu_{1,\epsilon})\right)
                                 \le \frac12\left(\ent_N(\mu_0) + \ent_N(\mu_1)\right),
 \]
 with $\mu_{\frac12,\epsilon} = (M \circ (\id,T))_\sharp\mu_{0,\epsilon}$. Letting $\epsilon \downarrow 0$
 along a subsequence we find a weak limit measure $\mu_\frac12 \in \midp(\mu_0,\mu_1)$ satisfying
 \[
  \ent_N(\mu_\frac12) \le \frac12\left(\ent_N(\mu_0) + \ent_N(\mu_1)\right)
 \]
 for all $N \ge 4$ by the lower semi-continuity of the entropies $\ent_N$.

 Now that we have \eqref{eq:entrobound} at $t = \frac12$ we can continue by taking midpoints between $\mu_0$ and $\mu_\frac12$,
 and between $\mu_\frac12$ and $\mu_1$ and this way obtain \eqref{eq:entrobound} at $t=\frac14$ and $t=\frac34$. Continuing
 iteratively we get \eqref{eq:entrobound} for a dense set of times. Finally, by the lower semi-continuity of $\ent_N$ we have
 \eqref{eq:entrobound} for all $t$, the measures $\mu_t$ being obtained as weak limits of $\mu_s$ as $s \to t$ along the
 dyadic time points.
\end{proof}

\subsection{Failure of the global $\CD(K,\infty)$ condition}

Finally, let us show the calculation implying that the space $X$ does not globally satisfy $\CD(K,N)$.
Because, given any $K \in \R$ and $N \in [1,\infty)$, the $\CD(K,N)$ condition implies the $\CD(K,\infty)$ condition,
it suffices to check the case $N = \infty$.

\begin{theorem}
 Given $K \in \R$, the space $(X,||\cdot||_\infty,\LL_2\restr{X})$ can be constructed
 so that it does not satisfy $\CD(K,\infty)$.
\end{theorem}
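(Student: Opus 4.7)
The idea is to exploit the bottleneck in $X$ created by the circular arc. Choose two measures $\mu_0, \mu_1$ that are uniform of total mass $1$ on two disjoint rectangular regions $A_0, A_1 \subset X$ of equal $\LL_2$-measure $V$, placed on opposite sides of the neck and far enough apart that every $l^\infty$-geodesic from $\supp(\mu_0)$ to $\supp(\mu_1)$ is forced through the narrow part of $X$. Then $\ent_\infty(\mu_0)=\ent_\infty(\mu_1)=-\log V$, and $W_2(\mu_0,\mu_1)\le \diam(X)$.

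The main step, and the main obstacle, is a purely geometric claim: there is a set $M\subset X$ with $\LL_2(M)\le V_N$ (essentially the area of the neck) such that for every $p_0\in A_0$, $p_1\in A_1$ the intersection $\midp(p_0,p_1)\cap X$ is contained in $M$. Granting this, every midpoint measure $\mu_{1/2}=(\e_{1/2})_\sharp\ppi$ coming from a $W_2$-geodesic joining $\mu_0$ to $\mu_1$ is supported in $M$, and Jensen's inequality applied to the convex function $t\mapsto t\log t$ yields the universal lower bound $\ent_\infty(\mu_{1/2})\ge -\log V_N$. The geometric claim itself uses the midpoint identity $\|z-p_0\|_\infty=\|z-p_1\|_\infty=\tfrac12\|p_0-p_1\|_\infty$ to pin the horizontal coordinate of $z$ near the midpoint of the two rooms; the non-convexity of $X$ at the neck then forces $z$ into a region of small vertical extent. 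This is the point that requires genuine work, since in the $l^\infty$-norm the set $\midp(p_0,p_1)$ in $\R^2$ is typically a full square, and one must show that the ambient constraint $z\in X$ cuts this square down to something of total area at most $V_N$ once the pair ranges over all of $A_0\times A_1$.

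Combining the above with the putative $\CD(K,\infty)$ inequality at $t=\tfrac12$ gives
\[
-\log V_N \;\le\; \ent_\infty(\mu_{1/2}) \;\le\; -\log V - \frac{K}{8} W_2^2(\mu_0,\mu_1) \;\le\; -\log V + \frac{|K|}{8}\diam(X)^2,
\]
hence $\log(V/V_N)\le |K|\diam(X)^2/8$. The construction of $X$ can be adapted (larger rooms and a correspondingly narrower neck, while keeping the local $\CD(0,4)$ property, which was only verified in small neighbourhoods of individual points and is therefore insensitive to the global size of the two rooms) so that the ratio $V/V_N$ is as large as we wish; for such a choice of $X$ the above inequality is violated, contradicting $\CD(K,\infty)$ for the prescribed $K$.
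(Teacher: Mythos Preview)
Your argument is correct and follows the same route as the paper's: uniform measures on two rectangles on opposite sides of the neck, Jensen's inequality to lower-bound $\ent_\infty(\mu_{1/2})$ by $-\log$ of the area of its support, and then making the neck thin enough to violate the $\CD(K,\infty)$ inequality at $t=\tfrac12$.

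Two remarks on differences. First, the paper simplifies by taking $A_1$ to be an exact horizontal translate of $A_0$ by $l$, so that every transport pair has $\|p_0-p_1\|_\infty=l$ and hence $W_2(\mu_0,\mu_1)=l$ exactly; you instead use the crude bound $W_2\le\diam(X)$, which is fine but slightly less clean. Second, you overestimate the difficulty of the geometric step. In the $l^\infty$-norm, the set $\midp(p_0,p_1)$ is not ``typically a full square'': whenever $|x_0-x_1|>|y_0-y_1|$ (which is the case here once $A_0,A_1$ are placed far apart horizontally), the equations $\|z-p_0\|_\infty=\|z-p_1\|_\infty=\tfrac12|x_0-x_1|$ force $z_1=\tfrac12(x_0+x_1)$ exactly, so $\midp(p_0,p_1)$ is a vertical segment. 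Thus the horizontal coordinate of any midpoint lies in an interval of width equal to the width of $A_0$, and the intersection with $X$ has area at most that width times the height of the neck. So the ``genuine work'' you flag is in fact a two-line computation, and the paper accordingly treats it as obvious.
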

\begin{proof} 
 Let $\mu_0 = \frac1{\LL_2(A_0)}\LL_2\restr{A_0}$ and $\mu_1 = \frac1{\LL_2(A_1)}\LL_2\restr{A_1}$ for some sets $A_1, A_2 \subset X$
 with $\LL_2(A_0) = \LL_2(A_1) > 0$ so that every optimal transport between $\mu_0$ and $\mu_1$ transports infinitesimal measures by
 a constant distance $l$. (We can let $A_1$ be a horizontal translation of $A_0$ by $l$.)

 Suppose that the space $(X,||\cdot||_\infty,\LL_2\restr{X})$ satisfies $\CD(K,\infty)$. Then there exists 
 $\mu_\frac12 = \rho_\frac12\LL_2 \in \midp(\mu_0,\mu_1)$ satisfying
 \[
 \ent_\infty(\mu_\frac12)=\int\rho_\frac12\log\rho_\frac12\,\d\LL_2\leq  - \log \LL_2(A_0) - \frac{K}{8}l^2.
 \]
 On the other hand by Jensen's inequality
 \[
 \ent_\infty(\mu_\frac12) \ge - \log \LL_2(A),
 \]
 where $A = \{x \in X \,:\,\rho_\frac12(x) > 0\}$. Therefore
 \[
  \LL_2(A) \ge e^{\frac{K}{8}l^2} \LL_2(A_0),
 \]
 where the multiplicative factor $e^{\frac{K}{8}l^2}$ depends only on $K$ and $l$. Therefore,
 by making the thin part of the space thin enough and taking $A_0$ and $A_1$ to be identical rectangles
 on opposite sides of the thin part, the corresponding midpoint measure does not fit into the thin part
 and we have a contradiction. See Figure \ref{fig:noCD} for an illustration.
\begin{figure}
   \psfrag{a}{$\mu_0$}
   \psfrag{b}{$\mu_1$}
   \psfrag{h}{$h$}
   \psfrag{l}{$l$}
   \psfrag{d}{$< e^{\frac{K}{8}l^2}h$}
   \centering
   \includegraphics[width=0.8\textwidth]{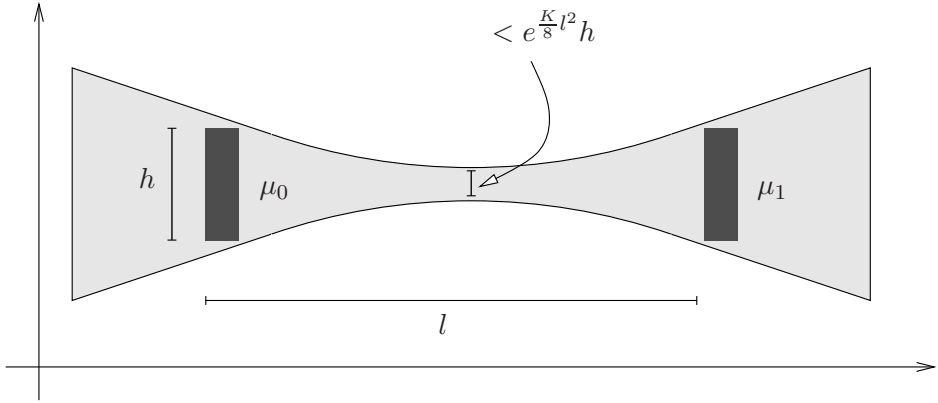}
   \caption{The space fails to satisfy the $\CD(K,\infty)$ condition. The measure $\mu_0$ cannot be transported to $\mu_1$
           without the midpoint measure $\mu_\frac12$ having too small support.}
   \label{fig:noCD}
\end{figure}
\end{proof}

\end{document}